\def\@abssec#1{\vspace{.05in}\footnotesize \parindent .2in
{\bf #1. }\ignorespaces}
\newcommand{\be}{\begin{equation}}
\newcommand{\ee}{\end{equation}}
\newcommand{\bea}{\begin{eqnarray}} 
  \newcommand{\eea}{\end{eqnarray}} 
\newcommand{\bee}{\begin{eqnarray*}}
\newcommand{\eee}{\end{eqnarray*}}
\DeclareRobustCommand{\tolaw}[0]{\mathop{\stackrel{(law)}{\longrightarrow}}}
\newtheorem{theorem}{Theorem}%[section]
\newtheorem{lemma}{Lemma}[section]
\newtheorem{proposition}[theorem]{Proposition}
\newtheorem{remark}[theorem]{Remark}
\def \Rm {\mathbb R}
\def \Nm {\mathbb N}
\def\un{{\mathbbmss{1}}} 
\def \Cm {\mathbb C}
\def\calL{{\mathcal L}}
\def\calC{{\mathcal C}}
\def\calF{{\mathcal F}}
\newcommand{\eps}{\varepsilon}
\newcommand{\E}{\mathbb E}
\renewcommand{\P}{\mathbb P}
\def\fref#1{{\rm (\ref{#1})}}
\newcommand{\cout}[1]{}
 \renewcommand{\arraystretch}{1.5}
\title{A note on stochastic Schr\"odinger equations with fractional multiplicative noise}
\author{Olivier Pinaud \footnote{pinaud@math.colostate.edu}}
\affil{Department of Mathematics, Colorado State University\\ Fort Collins, CO 80523}
\begin{document}
 
\maketitle

%\tableofcontents
  
 \begin{abstract}
 This work is devoted to non-linear stochastic Schr\"odinger equations with multiplicative fractional noise, where the stochastic integral is defined following the Riemann-Stieljes approach of Z\"ahle. Under the assumptions that the initial condition is in the Sobolev space $H^q(\Rm^n)$ for a dimension $n$ less than three and $q$ an integer greater or equal to zero, that the noise is a $Q-$fractional Brownian motion with Hurst index $H\in(\frac{1}{2},1)$ and spatial regularity $H^{q+4}(\Rm^n)$, as well as appropriate hypotheses on the non-linearity, we obtain the local existence of a unique pathwise solution in $\calC^0(0,T,H^q(\Rm^n)) \cap \calC^{0,\gamma}(0,T,H^{q-2}(\Rm^n))$, for any $\gamma \in [0,H)$. Contrary to the parabolic case, standard fixed point techniques based on the mild formulation of the SPDE cannot be directly used because of the weak smoothing in time properties of the Schr\"odinger semigroup. We follow here a different route and our proof relies on a change of phase that removes the noise and leads to a Schr\"odinger equation with a magnetic potential that is not differentiable in time. 
\end{abstract}
 
%\begin{AMS} 
%\end{AMS}  

\renewcommand{\thefootnote}{\fnsymbol{footnote}}
\renewcommand{\thefootnote}{\arabic{footnote}}

\renewcommand{\thefootnote}{\fnsymbol{footnote}}
\renewcommand{\thefootnote}{\arabic{footnote}}

\renewcommand{\arraystretch}{1.1}

%\begin{keywords}
%\paragraph{keywords:} inverse problem, imaging, correlations, 
%radiative transfer, random media.

%\end{keywords}

%\paragraph{AMS:} 
%\begin{AMS}
%\end{AMS}

%\pagestyle{myheadings}
%\thispagestyle{plain}

%%%%%%%%%%%%%%%%%%%%%%
%%% BEGINNING TEXT %%%
%%%%%%%%%%%%%%%%%%%%%%

%%%%%%%%%%%%%%%%%%%%%%%%%%%%%%%%%%%%%%%%%%%%
\section{Introduction}
\label{sec:intro}
%%%%%%%%%%%%%%%%%%%%%%%%%%%%%%%%%%%%%%%%%%%%%%%%%%%%%%%%%%%%%%%%%%
This work is concerned with the existence theory for the stochastic Schr\"odinger equation with fractional multiplicative random noise of the form
\be \label{SSE}
\left\{
\begin{array}{l}
d \Psi=i \Delta \Psi dt-i \Psi  d B_t^H -ig(\Psi)dt, \qquad t>0, \qquad x \in \Rm^n, \quad n\leq 3\\
\Psi(t=0,\cdot)=\Psi_0,
\end{array}
\right.
\ee
where $B_t^H\equiv B^H(t,x)$ is an infinite dimensional fractional Brownian motion in time and smooth in the space variables. The sense of the stochastic integral will be precised later on and the term $g(\Psi)$ is non-linear. We limit ourselves to $n \leq 3$ for physical considerations, but the theory should hold for arbitrary $n$ with  adjustments of some hypotheses. Our interest for such a problem is motivated by the study of the propagation of paraxial waves in random media that are both strongly oscillating and slowly decorrelating in the variable associated to the distance of propagation. Such media are encountered for instance in turbulent atmosphere or in the earth's crust \cite{dolan,sidi}. More precisely, it is well-known that the wave equation reduces in the paraxial approximation \cite{TAP} to the Schr\"odinger equation on the enveloppe function $\Psi$, which reads in three dimensions
$$
i \partial_z \Psi=-\Delta_\bot \Psi + V(z,x) \Psi, \qquad z \in \Rm^+, \quad x\in \Rm^2,
$$
where $z$ is the direction of propagation of the collimated beam, $x=(x_1,x_2)$ is the transverse plane, $\Delta_\bot=\partial_{x_1^2}^2+\partial_{x_2^2}^2$, and $V$ is a random potential accounting for the fluctuations of the refraction index. If $V$ is stationary and its correlation function $R(z,x):=\E \{V(z+u,x+y) V(u,y)\}$ has the property that
$$
R(z,x) \underset{z \to \infty}{\sim} z^{-\alpha} R_0(x),
$$
where $R_0$ is a smooth function and $0<\alpha<1$, then the process $V$ presents long-range correlations in the $z$ variable since $R$ is not integrable. Rescaling $V$ as $V \to \eps^{-\frac{\alpha}{2}} V(z/\eps,x)$ and invoking the non-central limit theorem, see e.g. \cite{taqqu}, one may expect formally when $\eps \to 0$ that 
$$
\frac{1}{\eps^{\frac{\alpha}{2}}} V(\frac{z}{\eps},x) \Psi^\eps(z,x)  \tolaw \Psi(z,x) dB^H(z,x),
$$
where $B^H$ is a Gaussian process with correlation function
\be \label{corrB}
\E\{B^H(z,x+y)B^H(z',x) \}=\frac{1}{2H(2H-1)} (z^{2H}+(z')^{2H}-|z-z'|^{2H})R_0(x),
\ee
with $H=1-\frac{\alpha}{2} \in (\frac{1}{2},1)$. Proving this fact is an open problem, while the short-range case (when $R$ is integrable) was addressed in \cite{BCF-SIAP-96,Garnier-ITO}, and the limiting wave function $\Psi$ is shown to be a solution to the It\^o-Schr\"odinger equation. Our starting point here is \fref{SSE}, where we added a non-linear term $g(\Psi)$ to account for possible non-linearities arising for instance in non-linear optics.

Let us be more precise now about the nature of the stochastic integral in \fref{SSE}. Since \fref{SSE} is obtained after formal asymptotic limit of a $L^2$ norm preserving Schr\"odinger equation, one may legitimately expect the limiting equation to also preserve the $L^2$ norm. The appropriate stochastic integral should therefore be of Stratonovich type, which in the context of fractional Brownian motions are encountered in the literature as pathwise integrals of various types, e.g. symmetric, forward, or backward, \cite{biagini-book,zahle}. Since in our case of interest the Hurst index $H$ is greater than $\frac{1}{2}$, all integrals are equivalent and can be seen as Riemann-Stieljes integrals of appropriate functions, see \cite{biagini-book,zahle} and section \ref{prelim} for more details. Such an integral is well-defined for instance  if both integrands are of H\"older regularity with respective indices $\beta$ and $\gamma$ such that $\beta+\gamma>1$ \cite{zahle}. 

In the context of SPDEs, the infinite dimensional character of the Gaussian process is usually addressed within two frameworks, whether for standard or fractional Brownian motions: the $Q-$(fractional) Brownian type, or the cylindrical type, see \cite{daprato}. The first class is more restrictive and requires the correlation operator $Q$ in the space variables to be a positive trace class operator (or even more for fractional Brownian motions, see \cite{mas-nualart}); in the second class, it is only supposed that $Q$ is a positive self-adjoint operator on some Hilbert space with appropriate Hilbert-Schmidt embeddings. As was done in \cite{mas-nualart} for parabolic equations with multiplicative fractional noise, we will assume our noise is of $Q-$fractional type, which yields direct pathwise (almost sure) estimates on $B^H$ in some functional spaces. The cylindrical case is more difficult and our approach does not seem to generalize to it. The $Q-$fractional case actually excludes stationary in $x$ correlation functions of the form \fref{corrB} since they lead to a cylindrical type noise, which is a drawback of our assumptions. This latter case, even in the more favorable situation of parabolic equations, seems to still be open. Standard Brownian motions are more amenable to cylindrical noises since the It\^o isometry holds. In the case of fractional type integrals,  the ``It\^o isometry'' involves the Malliavin derivative of the process, which is difficult to handle in the context of SPDEs with multiplicative noise. Hence, an existence theory for the Schr\"odinger equation in some average sense seems more involved to achieve, and we thus focus on a pathwise theory which requires the $Q-$Brownian assumption in our setting. 

Stochastic ODEs with fractional Brownian motion were investigated in great generality in \cite{nualart-rasc}. Stochastic PDEs with fractional multiplicative noise are somewhat difficult to study and to the best of our knowledge, the most advanced results in the field are that of Maslowski and al \cite{mas-nualart}, Duncan et al \cite{duncan-mas-mult} or Grecksch et al \cite{grecksch}. The reference \cite{duncan-mas-mult} involves finite dimensional fractional noises, which is a limitation. Several other works deal with additive noise, which is a much more tractable situation as stochastic integrals are seen as Wiener integrals \cite{duncan-mas,tindel,gautier} and cylindrical type noises are allowed. References \cite{mas-nualart,duncan-mas-mult,grecksch} consider variations of parabolic equations of the form
\be \label{equ}
d u= A u dt+u d B_t^H,
\ee
where $A$ is the generator of an analytic semigroup $S$, and the equation can be complemented with non-linear terms and a time-dependency in $A$ in \cite{duncan-mas-mult}. The noise $B^H$ in these references is a $Q-$fractional Brownian with possibly additional assumptions. The difficulty is naturally to make sense of the term $u d B_t^H$ and to show that $u$ is H\"older in time. In that respect, the analyticity hypothesis is crucial: indeed, the standard technique to analyze \fref{equ} is to use mild solutions of the form
$$
u(t)=S(t)u_0+\int_0^t S(t-s)u(s)d B^H_s,
$$
and for the integral to exist, one needs the term $S(t-s)u(s)$ to be roughly of H\"older regularity in time with index greater than $1-H$. This means that both $u$ and the semigroup $S$ need such a regularity. While the term $u(s)$ can be treated in the fixed point procedure, the semigroup $S(t-s)$ has to be sufficiently smooth in time, which holds for analytic semigroups, but not in the case of $C_0$ unitary groups generated by $i \Delta$ in the Schr\"odinger equation. In the latter situation, one can ``trade'' some regularity in time for $S$ with some spatial regularity on $u$, but this procedure does not seem to be exploitable in a fixed point procedure. Another possibility could be to take advantage of the regularizing properties of the Schr\"odinger semigroup that provide a gain of almost half a derivative in space, and therefore to almost a quarter of a derivative in time \cite{cazenave}. It looked to us rather delicate to follow such an approach since the smoothing effects hold for particular topologies involving spatial weights which looked fairly intricate to handle in our problem, even by using the classical exchange regularity/decay for the Schr\"odinger equation. The strictly linear case (i.e. when $g=0$ in \fref{SSE}) can likely be treated by somewhat brute force with iterated Wiener integrals and the Hu-Meyer formula, but this approach does not carry on to the non-linear setting.

We propose in this work a different route than the mild formulation and a quite simple remedy based on two direct observations: (i) the usual change of variables formula holds for the pathwise stochastic integral and (ii) using it along with a change of phase removes the noise and leads to a Schr\"odinger equation with magnetic vector potential $A(t,x)=-\nabla B^H(t,x)$. Forgetting for the moment the non-linear term $g(\Psi)$, introducing $\varphi(t,x):=e^{i B^H(t,x)} \Psi(t,x)$ the filtered wavefunction, and supposing without lack of generality that $B^H(0,x)=0$, this yields the system
\be \label{SEM}
\left\{
\begin{array}{l}
i \partial_t \varphi = -\Delta_{B_t^H} \varphi, \qquad \Delta_{B_t^H} =e^{i B_t^H}\circ \Delta \circ e^{-i B_t^H}\\
\varphi(t=0,\cdot)=\Psi_0,
\end{array}
\right.
\ee
which is a standard Schr\"odinger equation with a time-dependent Hamiltonian. There is a vast literature on the subject, see \cite{RS-80-2, debouard_mag,kato0, katolinear,nakamura, michel,yajima,yajima2,zagrebnov} for a non-exhaustive list. One of the most classical assumptions on $A$ for the existence of an evolution operator generated by the Hamiltonian $\Delta_{B_t^H}$ is that $A$ is a $\calC^1$ function in time with values in $H^1(\Rm^n)$. This is of course not verified for the fractional Brownian motion. The price to pay for that is to require additional spatial regularity, and one possibility (likely not optimal) is to suppose that $B^H$  has values in $H^4(\Rm^n)$. Assuming such a strong regularity is naturally a  drawback in this approach.

Regarding the treatment of the non-linearity, we suppose that it is invariant by a change of phase, that is $e^{iB^H_t} g(\Psi)=g(\varphi)$, which is verified by power non-linearities of the form $g(\Psi)=|\Psi^\sigma| \Psi$ or by $g(\Psi)=V[\Psi] \Psi$ where $V$ is the Poisson potential. Contrary to the case of non-linear It\^o-Schr\"odinger equations where various tools such as Strichartz estimates or Morawetz estimates have been successfully used to investigate focusing/defocusing phenomena in random and deterministic settings \cite{cazenave,debu2,debu3,debu4}, there are very few available techniques  to study \fref{SEM} with a potential vector $A$ not smooth in time and augmented with the term $g$. There are Strichartz estimates in the context of magnetic Schr\"odinger equations, but some require $A$ to be $\calC^1$ in time \cite{yajima}, and some others avoid such an hypothesis but assume instead that $A$ is small in some sense \cite{stefanov_mag}, which has no reason to hold here. As a result, we are lead to make rather crude assumptions on $g$ in order to obtain a local existence result. Moreover, the analysis of non-linear Schr\"odinger equations generally relies in a crucial manner on energy methods. In our problem of interest, we are only able to obtain energy conservation for smooth solutions, which turns out to be of no use when trying to obtain a global-in-time result and limits us to local results, unless the non-linearity is globally Lipschitz in the appropriate topology. This is due to the fractional noise that does not allow us to obtain $H^1$ estimates for $\Psi$ via the energy relation, as we explain further in remark \ref{ener}.

The main result of the paper is therefore a local existence result of pathwise solutions to \fref{SSE} with a smooth $Q-$fractional noise $B^H_t$ and appropriate assumptions on the non-linearity $g$. The  article is structured as follows: in section \ref{prelim}, we recall basic results on fractional stochastic integration, and present our main result in section \ref{main}; section \ref{secmag} is devoted to the magnetic Schr\"odinger equation \fref{SEM}, while section \ref{back} concerns the proof of our main theorem.

\section{Preliminaries} \label{prelim}
\noindent \textbf{Notation.} We denote by $H^k(\Rm^n)$ and $W^{k,q}(\Rm^n)$, $1 \leq n \leq 3$, the standard Sobolev spaces with the convention that $H^0(\Rm^n):=L^2(\Rm^n)$. For a Banach space $V$, $T>0$, and $0<\alpha<1$, $W_{\alpha,1}(0,T,V)$ 
%  and $W_{1-\alpha,\infty}(0,T,V)$% 
denote the space or mesurable functions $f: [0,T] \to V$  equipped with the norm
$$
\|f \|_{\alpha,1,V}=\int_0^T\left(\frac{\|f(s)\|_V}{s^\alpha}+\int_0^s\frac{\|f(s) -f(\tau)\|_V}{(s-\tau)^{\alpha+1}} d\tau \right) ds.
$$
%and
%$$
%\|f \|_{1-\alpha,\infty}=\sup_{0<s<t<T} \left(\frac{\|f(t)-f(s)\|_V}{(t-s)^{1-\alpha}}+\int_s^t \frac{\|f(\tau) -f(s)\|_V}{(\tau-s)^{2-\alpha}} d\tau\right%)
%$$
The space $\calC^{0,\alpha}(0,T,V)$ denotes the classical H\"older space of functions with values in $V$. When $V=\Cm$ or $\Rm$, we will simply use the notations $W_{\alpha,1}(0,T)$, $\calC^{0,\alpha}(0,T)$ and $\|\cdot \|_{\alpha,1}$. Notice that for any $\eps>0$, $\calC^{0,\alpha+\eps}(0,T,V) \subset W_{\alpha,1}(0,T,V)$. For two Banach spaces $U$ and $V$, $\calL(U,V)$ denotes the space of bounded operators from $U$ to $V$, with the convention $\calL(U)=\calL(U,U)$. The $L^2$ inner product is denoted by $(f, g)=\int_{\Rm^n} \overline{f}g dx$ where $\overline{f}$ is the complex conjugate of $f$.\\
%If $\calD'(\Rm^n)$ if the space of distributions on $\Rm^n$, we denote by $\langle \cdot, \cdot \rangle$ the $\calD'(\Rm^n)-\calD(\Rm^n)$ pairing, and by 

\noindent \textbf{Fractional Brownian motion.} For some positive time $T$, we denote by $\beta^H=\{ \beta^H(t), \; t \in [0,T]\}$ a standard fractional Brownian motion (fBm) over a probability space  $(\Omega, \calF,\P)$ with Hurst index $H \in (\frac{1}{2},1)$. We will denote by $L^2(\Omega)$ the space of square integrable random variables for the measure $\P$ and will often omit the dependence of $\beta^H$ on $\omega \in \Omega$ for simplicity. The process $\beta^H$ is a centered Gaussian process with covariance 
$$
\E\{ \beta^H_t\beta^H_s \}=\frac{1}{2} (t^{2H}+s^{2H}-|t-s|^{2H}).
$$
Since $\E\{ (\beta^H_t-\beta^H_s)^2\}=|t-s|^{2H}$, $\beta^H$ admits a H\"older continuous version with index strictly less than $H$. In order to definite the infinite dimensional noise $B^H(t,x)$, consider a sequence of independent fBm $(\beta_n^H)_{n \in \Nm}$. Let $Q$ be a positive trace class operator on $L^2(\Rm^n)$ and denote by $(\mu_n, e_n)_{n \in \Nm}$ its spectral elements. For $V=H^{q+4}(\Rm^n)$, $q$ non-negative integer, and $\lambda_n=\sqrt{\mu_n}$, we assume that
\be \label{assumQ}
\sum_{p \in \Nm} \lambda_p \|e_p\|_V < \infty.
\ee
The process $B^H(t,x)$ is then formally defined by
$$
B^H(t,x):=\sqrt{Q} \sum_{p \in \Nm} e_p(x) \beta^H_p(t)=\sum_{p \in \Nm} \lambda_p  e_p(x) \beta^H_p(t).
$$
The sum is normally convergent in $\calC^{0,\gamma}(0,T,V)$, $\P$ almost surely for $0\leq \gamma<H$. Indeed, in the same fashion as \cite{mas-nualart}, let 
%\be \label{defK}
$$
K(\omega)=\sum_{p \in \Nm} \lambda_p \|e_p\|_V \|\beta^H_p(\cdot, \omega)\|_{C^{0,\gamma}(0,T)}
$$ so that by monotone convergence
$$
\E K=\sum_{p \in \Nm} \lambda_p \|e_p\|_V \E \|\beta^H_p\|_{C^{0,\gamma}(0,T)}.
$$
% since \fref{assumQ} holds and 
% $$
% \E \| B^H_t\|_{\calC^{0,\gamma}(0,T,V)} \leq \sum_{p \in \Nm} \lambda_p \|e_p\|_V  \E \|\beta^H_p\|_{C^{0,\gamma}(0,T)}
% $$
According to \cite{nualart-rasc} Lemma 7.4, for every $T>0$ and $\eps>0$, there exists a positive random variable $\eta_{\eps,T,p}$ where $\E\{|\eta_{\eps,T,p}|^q \}$ is finite for $1 \leq q < \infty$ and independent of $p$ since the $\beta_p^H$ are identically distributed, such that $|\beta^H_p(t)-\beta^H_p(s)| \leq \eta_{\eps,T,p} |t-s|^{H-\eps}$ almost surely. Hence, thanks to \fref{assumQ} and picking $\gamma=H-\eps$, we have $\E K<\infty$,
\be \label{defK}
K(\omega) < \infty, \quad \P \quad \textrm{almost surely},
\ee
and $B^H$ defines almost surely an element of $\calC^{0,\gamma}(0,T,V)$. As a contrast, a cylindrical fractional Brownian motion is defined for a positive self-adjoint $Q$, which does not provide us with almost sure bounds on $B^H$ in  $\calC^{0,\gamma}(0,T,V)$. Suppose indeed that $\sqrt{Q}$ is a convolution operator of the form $\sqrt{Q} u= g*u$ for some smooth real-valued kernel $q$ and that $(e_p)_{p \in \Nm}$ is a  real-valued basis of $L^2(\Rm^n)$. Then, the resulting correlation function is stationary (this follows from the convolution and is motivated by \fref{corrB}) and 
$$
\E \{(B^H(t,x)-B^H(s,x))^2\}= |t-s|^{2H}\sum_{p \in \Nm} (g*e_p(x))^2=|t-s|^{2H} \|g\|^2_{L^2}
$$
so that $B^H$ belongs to $\calC^{0,\gamma}(0,T,L^\infty(\Rm^n, L^2(\Omega)))$ for $0\leq \gamma \leq H$. As explained in the introduction, we are not able to handle such a noise since integration in the probability space is required beforehand in order to get some estimates. This is not an issue in the context of standard Brownian motions or additive fractional noise, but leads to technical difficulties here.\\

\noindent \textbf{Fractional stochastic integration.} We follow the approach of \cite{mas-nualart,nualart-rasc} based on the work of Z\"ahle \cite{zahle} and introduce the so-called Weyl derivatives defined by, for any $\alpha \in (0,1)$ and $t \in (0,T)$:
\bee
D_{0+}^\alpha f(t)&=&\frac{1}{\Gamma(1-\alpha)} \left(\frac{f(t)}{t^\alpha}+\alpha \int_0^t \frac{f(t) -f(s)}{(t-s)^{\alpha+1}} ds \right)\\
D_{T-}^\alpha f(t)&=&\frac{(-1)^\alpha}{\Gamma(1-\alpha)} \left(\frac{f(t)}{(T-t)^\alpha}+\alpha \int_t^T \frac{f(t) -f(s)}{(s-t)^{\alpha+1}} ds \right),
\eee
whenever these quantities are finite. Above, $\Gamma$ stands for the Euler function. Following \cite{zahle}, the generalized Stieljes integral of a function $f  \in  \calC^{0,\lambda}(0,T)$ against a function $g \in \calC^{0,\gamma}(0,T)$ with $\lambda+\gamma>1$, $\lambda>\alpha$ and $\gamma>1-\alpha$ is defined by
\be \label{stiel}
\int_0^T f dg:=(-1)^\alpha \int_0^T D_{0+}^\alpha f(s) D_{T-}^{1-\alpha} g_{T-}(s) ds,
\ee
with $g_{T-}(s)=g(s)-g(T-)$. The definition does not depend on $\alpha$ and
$$
\int_0^t f dg:=\int_0^T f \un_{(0,t)} dg.
$$
The integral can be extended to different classes of functions since, see \cite{nualart-rasc},
\be \label{estimint}
\left|\int_0^T f dg \right| \leq \|f \|_{\alpha,1} \Lambda_\alpha(g),
\ee
where
$$
\Lambda_\alpha(g):=\frac{1}{\Gamma(1-\alpha) \Gamma(\alpha)} \sup_{0<s<t<T} \left(\frac{|g(t)-g(s)|}{(t-s)^{1-\alpha}}+\alpha \int_s^t \frac{|g(\tau) -g(s)|}{(\tau-s)^{2-\alpha}} d\tau\right),
$$
so that the integral is well-defined if $f \in W_{\alpha,1}(0,T)$ and $\Lambda_\alpha(g)<\infty$. Besides, the fractional integral satisfies the following change of variables formula, see \cite{zahle}: let $F \in \calC^1(\Rm \times [0,T])$, $g \in \calC^{0,\lambda}(0,T)$ and  $\partial_1 F(g(\cdot),\cdot) \in \calC^{0,\gamma}(0,T)$ with $\lambda+\gamma>1$, then
\be \label{chain}
F(g(t),t)-F(g(s),s)=\int_s^t \partial_2 F(g(\tau),\tau) d\tau+
\int_s^t \partial_1 F(g(\tau),\tau) d g(\tau),
\ee
where $\partial_j F$, $j=1,2$ denotes the partial derivative of $F$ with respect to the $j$ coordinate.

For some Banach space $U$ and an operator-valued random function $F \in W_{\alpha,1}(0,T,\calL(V,U))$ almost surely for some $\alpha \in (1-H,\frac{1}{2})$, the stochastic integral of $F$ with respect to $B^H$ is then formally defined by
\be \label{defintsto}
\int_0^t F_s d B_s^H:=\sum_{p \in \Nm} \lambda_p \int_0^t F_s(e_p) d\beta_p^H(s).
\ee
The integral  defines almost surely an element of $U$ for all $t\in[0,T]$ since by Jensen's inequality for the second line
\bee
\sum_{p \in \Nm} \lambda_p \left\| \int_0^t F_s(e_p) d\beta_p^H(s) \right\|_U &\leq& \sum_{p \in \Nm} \lambda_p  \Lambda_\alpha(\beta^H_p)  \left\| \|F_s(e_p) \|_{\alpha,1} \right\|_U \\
&\leq& C \|F_s\|_{\alpha,1,\calL(U,V)} \sum_{p \in \Nm} \lambda_p \|e_p\|_V \Lambda_\alpha(\beta_p^H)
\eee
and as shown in \cite{mas-nualart}, 
\be \label{finitelamb}
\sum_{p \in \Nm} \lambda_p \|e_p\|_V \Lambda_\alpha(\beta_p^H(\cdot,\omega)) <\infty \qquad \P \quad \textrm{almost surely}.
\ee
Hence \fref{defintsto} is well-defined and the convergence of the sum has to be understood as the $\P$ almost sure convergence in $U$.

We will use the following two results: the first Lemma is a generalization of the change of variables formula \fref{chain} to the infinite dimensional setting, and the second a version a the Fubini theorem adapted to the stochastic integral. Their proofs are given in the appendix. Below, $V=H^{q+4}(\Rm^n)$.
\begin{lemma} \label{chain2} Let $F: V \times [0,T] \to \Cm$ be a continuously differentiable function. Let $\partial_1 F$ be the differential of $F$ with respect to the first argument and $\partial_2 F$ be its partial derivative with respect to the second. For every $v \in V$ and $B \in \calC^{0,\gamma}(0,T,V)$ for any $0\leq 
\gamma<H$, let $\phi(t):=\partial_1 F(B_t,t)(v)$. Assume that $\phi \in \calC^{0,\lambda}(0,T)$ with $\lambda+\gamma>1$, and that there exists a constant $C_M>0$ such that, for all $B$ with $\|B\|_{\calC^{0,\gamma}(0,T,V)} \leq M$:
\be \label{hypphi}
\|\phi\|_{\calC^{0,\lambda}(0,T)} \leq C_M \|v\|_V.
\ee
%$$
%|D^\alpha_{0+} \partial_1 F (B^H_s,s)(v)|  \leq C h(s)\|v\|_V .
%$$
Then, we have the change of variables formula, $\forall (s,t) \in [0,T]^2$, $\P$ almost surely:
$$
F(B_t^H,t)-F(B_s^H,s)=\int_s^t \partial_2 F(B_{\tau}^H,\tau) d\tau+
\sum_{p \in \Nm} \lambda_p \int_s^t \partial_1 F(B_{\tau}^H,\tau)(e_p) d \beta^H_p(\tau).
$$
\end{lemma}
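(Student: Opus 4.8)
The plan is to reduce Lemma \ref{chain2} to the finite-dimensional change of variables formula \fref{chain} by truncating the series defining $B^H$, and then pass to the limit. Write $B^{H,N}_\tau := \sum_{p=1}^N \lambda_p e_p \beta^H_p(\tau)$; then $B^{H,N} \to B^H$ in $\calC^{0,\gamma}(0,T,V)$ $\P$-almost surely, and $\|B^{H,N}\|_{\calC^{0,\gamma}(0,T,V)} \le K(\omega)$ uniformly in $N$, with $K$ the a.s.-finite random variable of \fref{defK}. Fix $s,t$ and set $G_N : \Rm^N \times [0,T] \to \Cm$, $G_N(y,\tau):=F\big(\sum_{p\le N}\lambda_p e_p y_p,\tau\big)$. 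Since $y\mapsto \sum_p \lambda_p e_p y_p$ is a bounded linear map $\Rm^N\to V$, one has $G_N\in\calC^1(\Rm^N\times[0,T])$ with $\partial_{y_p}G_N(y,\tau)=\lambda_p\partial_1 F\big(\sum_q\lambda_q e_q y_q,\tau\big)(e_p)$ and $\partial_\tau G_N(y,\tau)=\partial_2 F\big(\sum_q\lambda_q e_q y_q,\tau\big)$. The proof of \fref{chain} in \cite{zahle} applies verbatim with $\Rm$ replaced by $\Rm^N$, so applying it to $G_N$ and $\tau\mapsto(\beta^H_1(\tau),\dots,\beta^H_N(\tau))$ — the Hölder hypotheses being met because \fref{hypphi} (with $M=K(\omega)$) forces $\tau\mapsto\partial_1 F(B^{H,N}_\tau,\tau)(e_p)$ into $\calC^{0,\lambda}(0,T)$, $\lambda+\gamma>1$, with norm $\le C_{K(\omega)}\|e_p\|_V$ — gives, $\P$-almost surely,
\[
F(B^{H,N}_t,t)-F(B^{H,N}_s,s)=\int_s^t\partial_2 F(B^{H,N}_\tau,\tau)\,d\tau+\sum_{p=1}^N\lambda_p\int_s^t\partial_1 F(B^{H,N}_\tau,\tau)(e_p)\,d\beta^H_p(\tau).
\]

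Next I would let $N\to\infty$. Because $B^{H,N}\to B^H$ uniformly on $[0,T]$ with values in $V$, the union of the curves $\{B^{H,N}_\tau:\tau\in[0,T]\}$ over $N$ together with $\{B^H_\tau:\tau\in[0,T]\}$ is precompact, so its closure $\calK$ is compact in $V$ and $F,\partial_1 F,\partial_2 F$ are bounded and uniformly continuous on $\calK\times[0,T]$. The left-hand side and the drift term then converge, by continuity of $F$ and by dominated convergence respectively. For the stochastic term I write its difference from the candidate limit $\sum_{p\in\Nm}\lambda_p\int_s^t\partial_1 F(B^H_\tau,\tau)(e_p)\,d\beta^H_p$ as $I_N-II_N$, where $II_N=\sum_{p>N}\lambda_p\int_s^t\partial_1 F(B^H_\tau,\tau)(e_p)\,d\beta^H_p$ and $I_N=\sum_{p\le N}\lambda_p\int_s^t[\partial_1 F(B^{H,N}_\tau,\tau)-\partial_1 F(B^H_\tau,\tau)](e_p)\,d\beta^H_p$. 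Choosing $\alpha\in(1-H,\lambda)\cap(1-H,\tfrac12)$ — possible since $H>\tfrac12$ — so that $\calC^{0,\lambda}(0,T)\subset W_{\alpha,1}(0,T)$, the estimate \fref{estimint} together with \fref{hypphi} bounds each integral in $II_N$ by $C\|e_p\|_V\Lambda_\alpha(\beta^H_p)$, whence $II_N\to0$ $\P$-almost surely by \fref{finitelamb}; the same bound shows the candidate limit is a well-defined element of $\Cm$ and coincides with the stochastic term in the statement.

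The main point is $I_N\to0$. Uniform continuity of $\partial_1 F$ on $\calK\times[0,T]$ and $\sup_\tau\|B^{H,N}_\tau-B^H_\tau\|_V\to0$ produce a modulus $\omega_N:=\sup_\tau\|\partial_1 F(B^{H,N}_\tau,\tau)-\partial_1 F(B^H_\tau,\tau)\|_{\calL(V,\Cm)}\to0$ that does not depend on $p$; hence $\tau\mapsto[\partial_1 F(B^{H,N}_\tau,\tau)-\partial_1 F(B^H_\tau,\tau)](e_p)$ has $\calC^0(0,T)$-norm $\le\omega_N\|e_p\|_V$, while \fref{hypphi} bounds its $\calC^{0,\lambda}(0,T)$-norm by $2C_{K(\omega)}\|e_p\|_V$, both uniformly in $N$ and $p$. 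Interpolating between $\calC^0$ and $\calC^{0,\lambda}$ controls its $\calC^{0,\lambda'}(0,T)$-norm, for $\lambda'=\theta\lambda$ with $\theta<1$ chosen so that $\lambda'>\alpha$, by $C\,\omega_N^{1-\theta}\|e_p\|_V$; since $\calC^{0,\lambda'}(0,T)\subset W_{\alpha,1}(0,T)$, \fref{estimint} then yields $|I_N|\le C\,\omega_N^{1-\theta}\sum_p\lambda_p\|e_p\|_V\Lambda_\alpha(\beta^H_p)\to0$ $\P$-almost surely. This proves the identity for fixed $(s,t)$ off a null set; intersecting over rational $(s,t)$ and using that both sides are $\P$-almost surely continuous in $(s,t)$ — the drift being an ordinary integral, and each $\int_s^t\partial_1 F(B^H_\tau,\tau)(e_p)\,d\beta^H_p$ depending continuously on its endpoints with the $p$-sum uniformly convergent by \fref{finitelamb} — extends it to all $(s,t)\in[0,T]^2$.

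I expect the main obstacle to be precisely this passage to the limit in the stochastic term: one only has crude uniform-in-$\tau$ convergence of the integrands, which must be upgraded to convergence in the $W_{\alpha,1}$-type norm demanded by \fref{estimint}, and this upgrade has to be uniform enough in $p$ to be summed against the series $\sum_p\lambda_p\|e_p\|_V\Lambda_\alpha(\beta^H_p)$, which is only almost surely finite and not bounded uniformly over $\omega$. The interpolation step, which consumes the slack $\lambda>\alpha$ provided by \fref{hypphi}, is what bridges this gap, and tracking it carefully is the only delicate part; the reduction to $\Rm^N$ via \fref{chain} and the treatment of the non-stochastic terms are routine.
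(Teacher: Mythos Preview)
Your approach is essentially the same as the paper's: truncate $B^H$ to $B^{H,N}$, apply the finite-dimensional change of variables \fref{chain}, and pass to the limit using the uniform bound \fref{hypphi} together with the summability \fref{assumQ}/\fref{finitelamb}. The only substantive difference lies in how you handle the convergence of the stochastic sum. The paper works directly with the Weyl-derivative representation \fref{stiel}: it shows $D_{s+}^\alpha\phi_p^N(\tau)\to D_{s+}^\alpha\phi_p(\tau)$ pointwise by continuity of $\partial_1 F$, dominates by $C|\tau-s|^{\lambda-\alpha-1}\|e_p\|_V$ via \fref{hypphi}, and applies dominated convergence twice (in $\tau$, then in $p$ via the Weierstrass rule). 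Your route---interpolating between the $\calC^0$ modulus $\omega_N$ and the uniform $\calC^{0,\lambda}$ bound from \fref{hypphi} to get $\calC^{0,\lambda'}$ convergence with $\lambda'>\alpha$, then feeding this into \fref{estimint}---is a clean alternative that avoids unpacking the integral kernel and makes the role of the slack $\lambda>\alpha$ explicit. Both arguments are correct; the paper's is shorter, yours is perhaps more transparent about why \fref{hypphi} is exactly what is needed. Your closing remark about upgrading from rational $(s,t)$ to all $(s,t)$ is not required by the statement (the quantifier order allows the null set to depend on $(s,t)$), but it does no harm.
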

% \begin{lemma} \label{chain2} Let $F: V \times [0,T] \to \Cm$ be a continuously differentiable function. Let $\partial_1 F$ be the differential of $F$ with respect to the first argument and $\partial_2 F$ be its partial derivative with respect to the second. Let $E=\calC^{0,\beta}([0,T],V)$, $\frac{1}{2}<\beta <H$, and for any $v \in V$ and $B_t\in E$, let $\phi(t):=\partial_1 F(B_t,t)(v)$. Suppose that $\partial_2 F$ is uniformly bounded on the bounded sets of $V \times [0,T]$ and that there exists $h \in \calC^{0,\alpha}(0,T)$ where $\alpha+\beta>1$. Assume moreover that there exists constants $C_{M,T}$ and $C_{M,T}'$ such that, $\forall (B_1,B_2)\in E \times E$ with $\|B_1\|_E\leq M$, and $\|B_2\|_E \leq M $:
% $$
% \forall s\in[0,T], \qquad \| \partial_1 F (B_1,s)(e_n)\|_{0,\alpha}\leq C_{M,T} \|e_n\|_V.
% $$
% $$
% \forall s\in[0,T], \qquad \| \partial_1 F (B_1,s)(e_n)-\partial_1 F (B_2,s)(e_n)\|_{0,\alpha} \leq C'_{M,T} \|e_n\|_V \| B_1-B_2 \|_E.
% $$
% Then, we have the It\^o formula, $\forall (s,t) \in [0,T]^2$:
% $$
% F(B_t^H,t)-F(B_s^H,s)=\int_s^t \partial_2 F(B_{\tau}^H,\tau) d\tau+
% \sum_{p \in \Nm} \lambda_p \int_s^t \partial_1 F(B_{\tau}^H,\tau)(e_p) d \beta^H_p(\tau).
% $$
% \end{lemma}
% The assumptions of Lemma \ref{chain2} are not optimal but enough for our purpose.

\begin{lemma} \label{fubini}Let $F \in W_{\alpha,1}(0,T,\calL(V,L^1(\Rm^n)))$ with $1-H<\alpha<\frac{1}{2}$. Then we have: 
% $$
% \sum_{p \in \Nm} \lambda_p \int_s^t \left(\int_{\Rm^n}  F_{\tau,x}(e_p) dx \right) d \beta^H_p(\tau)=\int_{\Rm^n} \left(\int_s^t   \sum_{p \in \Nm} \lambda_p  F_{\tau,x}(e_p) d \beta^H_p(\tau) \right)dx 
% $$ 
$$
\sum_{p \in \Nm} \lambda_p \int_s^t \left(\int_{\Rm^n}  F_{\tau,x}(e_p) dx \right) d \beta^H_p(\tau) =\int_{\Rm^n} \left(\int_s^t F_{\tau,x} dB_\tau^H \right)dx.
$$
\end{lemma}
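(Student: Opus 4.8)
\textbf{Proof proposal for Lemma \ref{fubini}.}

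The plan is to reduce the identity to the classical deterministic Fubini theorem for the Riemann--Stieljes integral \fref{stiel} applied to each scalar component of the series \fref{defintsto}, and then to interchange the resulting sum over $p$ with the spatial integral. First I would fix $(s,t)\in[0,T]^2$ and, for each fixed $p$, look at the scalar-valued function $\tau\mapsto \int_{\Rm^n} F_{\tau,x}(e_p)\,dx$ versus the $L^1(\Rm^n)$-valued function $\tau\mapsto F_\tau(e_p)$. Since $F\in W_{\alpha,1}(0,T,\calL(V,L^1(\Rm^n)))$, the map $\tau\mapsto F_\tau(e_p)$ lies in $W_{\alpha,1}(0,T,L^1(\Rm^n))$ with norm bounded by $\|F\|_{\alpha,1,\calL(V,L^1)}\|e_p\|_V$; in particular its $W_{\alpha,1}$-seminorm controls, for a.e. $x$, the scalar seminorm of $\tau\mapsto F_{\tau,x}(e_p)$ once one integrates in $x$ (the $L^1$ norm passes inside the Weyl-difference quotients by the triangle inequality). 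So the scalar integral $\int_s^t \big(\int_{\Rm^n} F_{\tau,x}(e_p)\,dx\big)\,d\beta^H_p(\tau)$ is well defined via \fref{estimint}, and so is $\int_s^t F_\tau(e_p)\,d\beta^H_p(\tau)$ as an $L^1(\Rm^n)$-valued integral.

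The core step is the commutation $\int_{\Rm^n}\big(\int_s^t F_{\tau,x}(e_p)\,d\beta^H_p(\tau)\big)dx = \int_s^t\big(\int_{\Rm^n} F_{\tau,x}(e_p)\,dx\big)d\beta^H_p(\tau)$ for each fixed $p$, i.e. a scalar-coefficient Fubini theorem between Lebesgue integration in $x$ and Z\"ahle--Stieljes integration in $\tau$. I would prove this from the representation \fref{stiel}: the Stieljes integral is an ordinary Lebesgue integral $(-1)^\alpha\int_0^T D_{0+}^\alpha(F_{\cdot,x}(e_p)\un_{(s,t)})(\sigma)\,D_{T-}^{1-\alpha}(\beta^H_p)_{T-}(\sigma)\,d\sigma$, and the Weyl derivative $D_{0+}^\alpha$ is itself given by an absolutely convergent integral in $\sigma$-difference quotients; hence the whole expression is a double Lebesgue integral in $(\sigma,x)$, and the classical Fubini--Tonelli theorem applies provided the integrand is absolutely integrable. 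Absolute integrability is exactly what the $W_{\alpha,1}(0,T,L^1(\Rm^n))$ hypothesis supplies: $\int_{\Rm^n}\int_0^T\big(|D_{0+}^\alpha(F_{\cdot,x}(e_p)\un_{(s,t)})(\sigma)|\big)\,d\sigma\,dx \le C\|F\|_{\alpha,1,\calL(V,L^1)}\|e_p\|_V$, and $D_{T-}^{1-\alpha}(\beta^H_p)_{T-}$ is bounded (up to constants by $\Lambda_\alpha(\beta^H_p)$) uniformly in $\sigma$, $\P$-a.s. So for each $p$ the commutation holds $\P$-a.s., with a bound of the form $\|\cdot\|\le C\|F\|_{\alpha,1,\calL(V,L^1)}\|e_p\|_V\,\Lambda_\alpha(\beta^H_p)$.

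Finally I would sum over $p$ with the $\lambda_p$ weights. The left-hand side $\sum_p\lambda_p\int_s^t(\int_{\Rm^n}F_{\tau,x}(e_p)\,dx)\,d\beta^H_p(\tau)$ converges absolutely in $\Cm$ because the $p$-th term is bounded by $C\|F\|_{\alpha,1,\calL(V,L^1)}\lambda_p\|e_p\|_V\Lambda_\alpha(\beta^H_p)$ and $\sum_p\lambda_p\|e_p\|_V\Lambda_\alpha(\beta^H_p)<\infty$ $\P$-a.s. by \fref{finitelamb}; the right-hand side is $\int_{\Rm^n}(\int_s^t F_{\tau,x}\,dB^H_\tau)\,dx$ where, by definition \fref{defintsto}, $\int_s^t F_{\tau,x}\,dB^H_\tau=\sum_p\lambda_p\int_s^t F_{\tau,x}(e_p)\,d\beta^H_p(\tau)$ with the series converging in $L^1(\Rm^n)$ (by the same bound and \fref{finitelamb}), so one further Fubini--type swap of $\sum_p$ with $\int_{\Rm^n}$ is justified by normal convergence in $L^1(\Rm^n)$. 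Combining the termwise identity with these two absolutely/normally convergent rearrangements gives the claim. The main obstacle I anticipate is the bookkeeping in the core step: checking that $D_{0+}^\alpha$ of the $L^1(\Rm^n)$-valued integrand commutes with $\int_{\Rm^n}dx$, i.e. that $\int_{\Rm^n} D_{0+}^\alpha(F_{\cdot,x}(e_p))\,dx = D_{0+}^\alpha\big(\int_{\Rm^n}F_{\cdot,x}(e_p)\,dx\big)$ pointwise in time, which again is a Fubini statement but requires care with the singular kernel $(\sigma-\sigma')^{-\alpha-1}$ near the diagonal; the $W_{\alpha,1}$-integrability of the difference quotients against $dx$ is precisely designed to make this legitimate.
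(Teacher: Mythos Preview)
Your proposal is correct and follows essentially the same route as the paper: unpack each Stieljes integral via the Weyl-derivative representation \fref{stiel}, apply classical Fubini--Tonelli in $(\tau,x)$ using the $W_{\alpha,1}(0,T,L^1)$ bound and the a.s.\ finiteness of $\Lambda_\alpha(\beta^H_p)$, then control the series by \fref{finitelamb}. The only cosmetic difference is organizational---the paper truncates the sum to $p\le N$, performs the swap on the finite sum, and passes to the limit via $L^1$ convergence of $f_N$, whereas you argue termwise first and then sum; the ingredients and estimates are identical.
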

% In the sequel, we will use the notation
% $$
% \int_s^t F_{\tau,x} dB_\tau^H:=\int_s^t \sum_{p \in \Nm} \lambda_p  F_{\tau,x}(e_p) d \beta^H_p(\tau).
% $$

\section{Main result} \label{main}

We present in this section the main result of the paper. We precise first in which sense \fref{SSE} is understood. We say that $\Psi \in \calC^0(0,T,H^q(\Rm^n)) \cap \calC^{0,\gamma}(0,T,H^{q-2}(\Rm^n))$, for all $0\leq \gamma<H$, $q$ non-negative integer, is a solution to \fref{SSE} if it verifies for all test function $w\in \calC^1(0,T,H^{q+2}(\Rm^n))$, for all $t \in [0,T]$ and $\P$ almost surely
\begin{align} \label{defsol} \nonumber
&\left( \Psi(t), w(t)   \right)-\left( \Psi_0, w(0) \right)
=\int_0^t  \left( \Psi(s), \partial_s w(s) \right) ds
\\[3mm]
& -i\int_0^t \left( \Psi(s),\Delta w(s)\right) ds  +i  \int_0^t\left(\Psi(s), w(s)  d B_s^H\right)+i\int_0^t \left(g(\Psi(s)),w(s) \right) ds, 
\end{align}
where the term involving the stochastic integral is understood as
$$
\int_0^t\left(\Psi(s), w(s)  d B_s^H \right):=\sum_{p \in \Nm} \lambda_p \int_0^t\left(\Psi(s)e_p, w(s)\right) d \beta^H_p(s).
$$
The latter is well-defined since the mapping $F_s: e_p \mapsto (\Psi e_p, w)$ belongs to $\in \calC^{0,\gamma}(0,T,\calL(V,\Rm))$ thanks to standard Sobolev embeddings for $n \leq 3$. We assume the following hypotheses on the non-linear term $g$:\\

\textbf{H}: We have $g( e^{i \theta(t,x)} \Psi)= e^{i \theta(t,x)} g(\Psi)$ for all real function $\theta$, and for any $\Psi_1, \Psi_2$ in $H^q(\Rm^n)$ with $\|\Psi_i\|_{H^s}\leq M$, $i=1,2$, there exist $p \in \{0,\cdots,q\}$ and positive constants $C_M$ and $C'_M$ such that
\bee
\| g(\Psi_1) \|_{H^q} &\leq& C_M \|\Psi\|_{H^q}\\
\| g(\Psi_1)-g(\Psi_2) \|_{H^p} &\leq& C'_M \|\Psi_1-\Psi_2\|_{H^p}.
\eee

The main result of this paper is the following:
\begin{theorem} \label{th1} Assume that \textbf{H} is satisfied. Suppose moreover that \fref{assumQ} is verified for $V=H^{q+4}(\Rm^n)$, $q$ non-negative integer. Then, for every $\Psi_0 \in H^q(\Rm^n)$, there exists a maximal existence time $T_M>0$ and a unique function $\Psi \in \calC^0(0,T_M,H^{q}(\Rm^n)) \cap\calC^{0,\gamma}(0,T_M,H^{q-2}(\Rm^n))$, $0\leq \gamma <H$, verifying \fref{defsol} for all $t\in[0,T_M]$ $\P$ almost surely. Moreover, $\Psi$ admits the following representation formula:
\be \label{repre_th}
\Psi(t)= e^{-i B^H_t} U(t,0) \Psi_0+ e^{-i B^H_t} \int_0^t U(t,s) e^{i B^H_s} g(\Psi(s))ds,
\ee
where $U=\{U(t,s)\}$ is the evolution operator generated by the operator
$$
i\Delta_{B_t^H} =i e^{i B_t^H}\circ \Delta \circ e^{-i B_t^H}.
$$
If in addition $\Im g(\Psi) \overline{\Psi}=0$, then for all $t\in[0,T_M]$ the charge conservation holds:
$$
\| \Psi(t)\|_{L^2}=\| \Psi(0)\|_{L^2}.
$$
If $g$ is globally Lipschitz in $H^q(\Rm^n)$, then the solution exists for all time $T<\infty$. 
\end{theorem}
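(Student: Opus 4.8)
The plan is to reduce \fref{SSE} to the magnetic Schr\"odinger equation \fref{SEM} through the change of phase $\varphi(t,x) = e^{iB^H_t(x)}\Psi(t,x)$, and to solve the latter by a Duhamel fixed-point argument built on the evolution operator $U(t,s)$. \emph{Step 1 (equivalence of the two formulations):} given a solution $\varphi$ of the weak form of \fref{SEM} with nonlinearity $g(\varphi)$, set $\Psi := e^{-iB^H_\cdot}\varphi$; applying the change of variables formula of Lemma \ref{chain2} to the phase $v \mapsto e^{-iv(x)}$, paired against a test function $w$, produces a $dB^H$ term which is precisely the multiplicative noise term appearing in \fref{defsol}, so that $\Psi$ solves \fref{defsol}; here hypothesis \textbf{H}, namely $e^{i\theta}g(\Psi) = g(e^{i\theta}\Psi)$, is used to identify $g(\varphi)$ with $e^{iB^H}g(\Psi)$, and the converse direction is analogous. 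This reduces the problem, pathwise in $\omega$, to a deterministic time-dependent Schr\"odinger equation whose Hamiltonian $\Delta_{B^H_t} = e^{iB^H_t}\circ\Delta\circ e^{-iB^H_t}$ has coefficients that are only H\"older in time, with index $\gamma < H$, but smooth in space, since $B^H(\cdot,\omega) \in \calC^{0,\gamma}(0,T,H^{q+4}(\Rm^n))$ almost surely by \fref{defK}.

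\emph{Step 2 (the evolution operator, main obstacle):} since $-\Delta_{B^H_t}$ is not $\calC^1$ in $t$, the classical Kato construction of the propagator does not apply directly, and the idea is to exploit the four extra spatial derivatives carried by $B^H$. One mollifies $B^H$ in time, builds the associated smooth propagators $U^\eps(t,s)$ by standard theory, and proves bounds on $U^\eps(t,s)$ as bounded operators on $H^k(\Rm^n)$ for $0 \le k \le q$ that depend only on $\|B^H\|_{\calC^{0,\gamma}(0,T,H^{q+4}(\Rm^n))}$ and not on $\eps$, together with uniform estimates of the form $\|(U^\eps(t,s) - \mathrm{Id})v\|_{H^{k-2}} \le C|t-s|^\gamma\|v\|_{H^k}$ which lose two derivatives. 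Passing to the limit $\eps \to 0$ then yields an evolution operator $U(t,s)$ with the same bounds, unitary on $L^2(\Rm^n)$; this construction occupies Section \ref{secmag}. The loss of two derivatives, in place of the half derivative one would gain from dispersive smoothing, is the price paid for not resorting to Strichartz-type estimates, and is precisely what forces the gap between $H^q$ and $H^{q-2}$ in the statement.

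\emph{Step 3 (fixed point and the remaining assertions):} with $U$ in hand one solves the Duhamel equation $\varphi(t) = U(t,0)\Psi_0 + \int_0^t U(t,s)\,g(\varphi(s))\,ds$ by the contraction mapping principle in a ball of $\calC^0(0,T,H^q(\Rm^n))$: the growth bound in \textbf{H} controls $\|g(\varphi)\|_{H^q}$ over the ball, the Lipschitz bound in $H^p$ with $p \le q$ yields contractivity in $\calC^0(0,T,H^p(\Rm^n))$ for $T$ small, boundedness of $U$ on $H^q$ and on $H^p$ closes the estimates, and the $H^q$ bound is recovered a posteriori; this also gives uniqueness. Setting $\Psi(t) = e^{-iB^H_t}\varphi(t)$ produces the representation \fref{repre_th}, and the H\"older bound $\Psi \in \calC^{0,\gamma}(0,T,H^{q-2}(\Rm^n))$ follows from the H\"older continuity in time of $e^{\pm iB^H_t}$ as a multiplier on $H^{q-2}(\Rm^n)$, using $B^H \in \calC^{0,\gamma}(0,T,H^{q+4}(\Rm^n))$, together with the H\"older estimate on $U(t,s)$ from Step 2 and the continuity of $\varphi$ in $H^q$. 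The maximal time $T_M$ arises from the usual continuation argument, the solution extending as long as $\|\Psi(t)\|_{H^q}$ stays finite. When $\Im(g(\Psi)\overline{\Psi}) = 0$, charge conservation holds for smooth data by differentiating $\|\varphi(t)\|_{L^2}^2$, the magnetic Hamiltonian being symmetric and the nonlinear term contributing $2\int_{\Rm^n}\Im(g(\varphi)\overline{\varphi})\,dx = 0$, and it extends to arbitrary $\Psi_0 \in H^q(\Rm^n)$ by density and continuity since $\|\Psi\|_{L^2} = \|\varphi\|_{L^2}$. Finally, if $g$ is globally Lipschitz on $H^q(\Rm^n)$ then $\|g(\varphi)\|_{H^q} \le C(1 + \|\varphi\|_{H^q})$ uniformly, so Gr\"onwall's inequality applied to the Duhamel identity bounds $\|\varphi(t)\|_{H^q}$ on every finite interval, precluding blow-up and giving $T_M = \infty$.
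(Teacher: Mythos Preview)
Your overall architecture is right, and Step~3 matches the paper. The genuine gap is in Step~1, where you write that ``the converse direction is analogous.'' It is not. The existence direction ($\varphi \Rightarrow \Psi$) works as you say: $\varphi \in \calC^1(0,T_M,H^{q-2})$, so Lemma~\ref{chain2} applies to $F(B^H_t,t) = (e^{-iB^H_t}\varphi(t), w(t))$ and produces the stochastic integral. But for uniqueness ($\Psi \Rightarrow \varphi$) you only know $\Psi \in \calC^{0,\gamma}(0,T_M,H^{q-2})$, not $\calC^1$, so Lemma~\ref{chain2} does not apply to $(e^{iB^H_t}\Psi(t),z(t))$; and you cannot simply test \fref{defsol} against $w = e^{-iB^H_t}z$, because the weak formulation requires $w \in \calC^1(0,T,H^{q+2})$ while $B^H_t$ is not differentiable in time. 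The paper devotes the bulk of Section~\ref{back} to this: one regularizes $B^H$ to a $\calC^1$-in-time process $B^{H,\eps}$, plugs $w_\eps = e^{-iB^{H,\eps}_t}z$ into \fref{defsol}, and then --- this is the crux --- reinterprets $\int_0^t (\Psi(s),\partial_s w_\eps(s))\,ds$ as a fractional Stieltjes integral via the Weyl derivatives $D^{1-\mu}_{0+}$ and $D^\mu_{t-}$, using the $\calC^{0,\gamma}$ regularity of $\Psi$ to pass to the limit $\eps \to 0$. Without this argument the converse fails and uniqueness is unproved.

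On Step~2, your route (mollify $B^H$ in time, build $U^\eps$ by classical Kato, extract uniform bounds, pass to the limit) is different from the paper's and more delicate than you indicate. The classical Kato choice $Q(t) = \lambda I - A(t)$ produces $\calL(Y)$-bounds depending on $\|\partial_t B^\eps\|$, which blows up as $\eps \to 0$, so the uniform estimate you assert is not automatic. The paper instead observes that Kato's hypothesis (H2) can be verified with a \emph{time-independent} isomorphism $Q = (-1)^k\sum_j \partial^{2k}_{x_j} + I$, at the price of the extra spatial regularity $B \in \calC^0(0,T,H^{2k+2})$; this yields the propagator directly from continuity of $B$ in time, with no mollification and no $\eps$-uniform bound to justify. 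As a side remark, the resulting $\varphi$ is actually in $\calC^1(0,T_M,H^{q-2})$ (Lipschitz, not merely H\"older), since $\partial_t \varphi = i\Delta_{B^H_t}\varphi - ig(\varphi)$ holds in $H^{q-2}$; the exponent $\gamma$ in the regularity of $\Psi$ comes solely from the factor $e^{-iB^H_t}$.
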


When $d=3$, a classical example of a non-linearity satisfying \textbf{H} for $q=p=1$ is $g(\Psi)=V[\Psi] \Psi$, where $V[\Psi]$ is the Poisson potential defined by
$$
V[\Psi](x)=\int_{\Rm^3} \frac{|\Psi(y)|^2}{|x-y|}dy.
$$
Indeed, $g$ is locally Lipschitz in $H^1(\Rm^3)$: let $\Psi_1, \Psi_2 \in H^1(\Rm^n)$; thanks to the Hardy-Littlewood-Sobolev inequality \cite{RS-80-2}, Chapter IX.4, as well as standard Sobolev embeddings, we have
$$
\|\nabla V[\Psi_1] -\nabla V_2[\Psi_2] \|_{L^3} \leq C \| |\Psi_1|^2-|\Psi_2|^2 \|_{L^{\frac{3}{2}}} \leq C \| \Psi_1 -\Psi_2 \|_{L^2}\| \Psi_1 +\Psi_2 \|_{H^1}
$$
and direct computations yield
$$
\| V [\Psi_1] \|_{L^\infty} \leq C \|\Psi_1\|^2_{L^2}+C\||\Psi_1|^2\|_{L^2}.
$$
Hence,
\begin{align*}
&\| g(\Psi_1)-g(\Psi_2)\|_{H^1} \\
&\qquad \leq  C\|V[\Psi_1] \|_{L^\infty} \|\Psi_1 -\Psi_2\|_{L^2}+ C\|\Psi_1-\Psi_2 \|_{H^1}\|\Psi_1+\Psi_2 \|_{H^1}\|\Psi_2\|_{L^2}\\
&\qquad \qquad +C\|V[\Psi_1] \|_{L^\infty} \|\nabla \Psi_1 - \nabla \Psi_2\|_{L^2}+C\| \Psi_1 -\Psi_2 \|_{L^2}\| \Psi_1 +\Psi_2 \|_{H^1} \|\Psi_2 \|_{L^6}\\
&\qquad \leq C( \| \Psi_1\|^2_{H^1}+ \| \Psi_2\|^2_{H^1})  \|\Psi_1-\Psi_2 \|_{H^1}.
\end{align*}

% \begin{align*}
% \| g(\Psi_1)-g(\Psi_2)\|_{H^1} &\leq & C\|V[\Psi_1] \|_{L^\infty} \|\Psi_1 -\Psi_2\|_{L^2}+ C\|\Psi_1-\Psi_2 \|_{H^1}\|\Psi_1+\Psi_2 \|_{H^1}\|\Psi_2\|_{L^2}\\
% &&+C\|V[\Psi_1] \|_{L^\infty} \|\nabla \Psi_1 - \nabla \Psi_2\|_{L^2}+C\| \Psi_1 -\Psi_2 \|_{L^2}\| \Psi_1 +\Psi_2 \|_{H^1} \|\Psi_2 \|_{L^6}\\
% &\leq& C( \| \Psi_1\|^2_{H^1}+ \| \Psi_2\|^2_{H^1})  \|\Psi_1-\Psi_2 \|_{H^1}.
% \end{align*}

Another example is given by power non-linearities of the form $g(\Psi)=\mu |\Psi|^{2\sigma} \Psi$ for some $\mu \in \Rm$ and $\sigma>0$. A $L^\infty$ bound is needed on $\Psi$ for \textbf{H} to be verified. When $n>1$, we set then $q=2$ and obtain, for all $\sigma\geq \frac{1}{2}$:
$$
\| g(\Psi) \|_{H^2} \leq C \| \Psi \|^{2\sigma+1}_{H^2}, \qquad \| g(\Psi_1)-g(\Psi_2) \|_{L^2} \leq C \| \Psi_2 +\Psi_1\|^{2\sigma}_{H^2}\| \Psi_1-\Psi_2 \|_{L^2},
$$
while it can be easily shown that $\textbf{H}$ is verified for $n=1$ and $q=1$ for all $\sigma \geq 0$.
%$$
%  \|\nabla V[\Psi_1]-\|_{L^3} \leq C \| \Psi_1 \|^2_{L^{3}} \leq C \| \Psi_1 \|^2_{H^1}
%  $$
% and direct computations yield
% $$
%  \| V [\Psi_1] \|_{L^\infty} \leq C \|\Psi_1\|^2_{L^2}+C\||\Psi_1|^2\|_{L^2}.
%  $$
% Hence,
% $$
% \| V[\Psi] \Psi \|_{H^1} \leq C \|\Psi\|^3_{H^1}
% $$
% and 
% \bee
% \| g(\Psi_1)-g(\Psi_2)\|_{L^2} &\leq & C\|V[\Psi_1] \|_{L^\infty} \|\Psi_1 -\Psi_2\|_{L^2}+ C\|V[\Psi_1]-V[\Psi_2} \|_{L^\infty}\|\Psi_2\|_{L^2}\\
% % &&+C\|V[\Psi_1] \|_{L^\infty} \|\nabla \Psi_1 - \nabla \Psi_2\|_{L^2}+C\| \Psi_1 -\Psi_2 \|_{L^2}\| \Psi_1 +\Psi_2 \|_{H^1} \|\Psi_2 \|_{L^6}\\
% % &\leq& C( \| \Psi_1\|^2_{H^1}+ \| \Psi_2\|^2_{H^1})  \|\Psi_1-\Psi_2 \|_{H^1}.
% \eee
% $$
% \|\nabla V[\Psi_1] -\nabla V_2[\Psi_2] \|_{L^3} \leq C \| |\Psi_1|^2-|\Psi_2|^2 \|_{L^{\frac{3}{2}}} \leq C \| \Psi_1 -\Psi_2 \|_{L^2}\| \Psi_1 +\Psi_2 \|_{H^1}
% $$
% and direct computations yield
% $$
% \| V [\Psi_1] \|_{L^\infty} \leq C \|\Psi_1\|^2_{L^2}+C\||\Psi_1|^2\|_{L^2}.
% $$
% Hence,
% \bee
% \| g(\Psi_1)-g(\Psi_2)\|_{H^1} &\leq & C\|V[\Psi_1] \|_{L^\infty} \|\Psi_1 -\Psi_2\|_{L^2}+ C\|\Psi_1-\Psi_2 \|_{H^1}\|\Psi_1+\Psi_2 \|_{H^1}\|\Psi_2\|_{L^2}\\
% &&+C\|V[\Psi_1] \|_{L^\infty} \|\nabla \Psi_1 - \nabla \Psi_2\|_{L^2}+C\| \Psi_1 -\Psi_2 \|_{L^2}\| \Psi_1 +\Psi_2 \|_{H^1} \|\Psi_2 \|_{L^6}\\
% &\leq& C( \| \Psi_1\|^2_{H^1}+ \| \Psi_2\|^2_{H^1})  \|\Psi_1-\Psi_2 \|_{H^1}.
% \eee

\begin{remark} \label{ener} In order to both lower the spatial regularity assumptions on $B^H$, $\Psi_0$, $g$ and to obtain global-in-time results, it is natural to consider the energy conservation identity (derived formally by multiplying \fref{mag} by $\overline{\partial_t \varphi}$ and integrating, and can be justified for classical solutions when $q \geq 2$ using the regularity of $\varphi$ of Theorem \ref{th_mag} and Lemma \ref{chain2}) that reads for $g=0$ for simplicity:
$$
\frac{1}{2} \| \nabla \Psi(t) \|^2_{L^2}=\frac{1}{2} \| \nabla \Psi_0 \|^2_{L^2}-\Im \int_0^t \int_{\Rm^n} \overline{\Psi(s)} \nabla \Psi(s) \cdot \nabla d B^H_s dx.
$$
Unfortunately, it is not clear to us how this identity can be used in order to obtain estimates on $\| \nabla \Psi\|_{W_{\alpha,1}(0,T,L^2)}$ for  $1-H<\alpha<\frac{1}{2}$ that would depend only on $\| \nabla \Psi_0 \|_{L^2}$ and $\| B^H\|_{\calC^{0,\gamma}(0,T,W^{1.\infty})}$, $\frac{1}{2}<\gamma<H$. Indeed, following the lines of the stochastic ODE case of \cite{nualart-rasc} in order to treat the stochastic integral and use the Gronwall Lemma,  what can be deduced from the above relation is an estimate of the form
$$
\left\| \| \nabla \Psi(t,\cdot) \|^2_{L^2} \right\|_{W_{\alpha,1}(0,T)} \leq C+C \int_0^T f(s) \| \nabla \Psi(s,\cdot) \|^2_{_{W_{\alpha,1}(0,T,L^2)}}ds
$$ 
for some positive integrable function $f$ and where the constant $C$ depends on $\| \nabla \Psi_0 \|_{L^2}$ and $\| B^H\|_{\calC^{0,\gamma}(0,T,W^{1.\infty})}$. 
This does not yield the desired bound since we cannot control the term $\| \nabla \Psi(s,\cdot) \|^2_{_{W_{\alpha,1}(0,T,L^2)}}$ by $\left\| \| \nabla \Psi(s,\cdot) \|^2_{L^2} \right\|_{W_{\alpha,1}(0,T)}$. Hence, as opposed to the standard Brownian case, energy methods do not provide us here with an $H^1$ global-in-time estimate. 
\end{remark}
\begin{remark} \label{rem2} When $q \geq 2$, then $\Psi$ is a classical solution to \fref{SSE} in the sense that it satisfies for all $t \in [0,T_m]$, $\P$ a.s., $x$ a.e.:
$$
\Psi(t)=\Psi(0)+i \int_0^t \Delta \Psi(s) ds-i \int_0^t \Psi(s) dB_s^H-i\int_0^t g(\Psi(s)) ds.
$$
A proof of this result is given in the appendix.
\end{remark}

The rest of the paper is devoted to the proof of Theorem \ref{th1}. The starting point is to define $\varphi(t,x)=e^{i B^H(t,x)} \Psi(t,x)$, to use the invariance of $g$ with respect to a change of phase and to formally apply Lemma \ref{chain2} to arrive at 
\be \label{MSE2}
i\partial_t \varphi = -\Delta_{B_t^H} \varphi+g(\varphi).
\ee
Remark that $\Delta_{B_t^H}$ can formally be recast as
$$
\Delta_{B_t^H} =\Delta -2i \nabla B_t^H \cdot \nabla  -|\nabla B_t^H|^2  -i \Delta B_t^H .
$$
In section \ref{secmag}, we construct the evolution operator $U=\{U(t,s)\}$ generated by $i\Delta_{B_t^H}$ and obtain the existence of a unique solution to the latter magnetic Schr\"odinger equation. In section \ref{back}, we use the regularity properties of the function $\varphi$ together with Lemma \ref{chain2} to prove that  $\Psi=e^{-i B^H_t}\varphi$ is the unique solution to \fref{SSE}. The existence follows from showing that $e^{-i B^H_t}\varphi$ is a solution to \fref{defsol}. The uniqueness stems from a reverse argument: owing a solution $\Psi$ to $\fref{defsol}$ with the corresponding regularity, we show that $\Psi e^{i B^H_t}$ is a solution to \fref{MSE2}. This requires some regularization since the function $ e^{-i B^H_t} z$ for $z$ smooth cannot be used as a test function in \fref{defsol}, as well as the interpretation of a classical integral involving a full derivative as a fractional integral.

\section{Existence theory of the magnetic Schr\"odinger equation} \label{secmag}
The first part of this section consists in constructing the evolution operator $U$. We follow the classical methods of Kato \cite{katolinear} and \cite{pazy}. The second part is devoted to the existence theory for the linear magnetic Schr\"odinger equation, which is then used for the non-linear case.

\subsection{Construction of the evolution operator}
We follow here the construction of \cite{pazy}, Chapter 5. Let $X$ and $Y$ be Banach spaces with norms $\|\cdot \|$ and $\| \cdot \|_Y$, where $Y$ is densely and continuously embedded in $X$. For $t \in [0,T]$, let $A(t)$ be the infinitesimal generator of a $C_0$ semigroup on $X$. Consider the following hypotheses:
\begin{itemize}
\item[(H1)] $\{ A(t)\}_{t\in [0,T]}$ is such that there are constants $\omega_0$ and $M\geq 1$, where $]\omega_0,\infty[ \subset \rho(A(t))$ for $t \in [0,T]$, $\rho(A(t))$ denoting the resolvent set of $A(t)$, and 
$$
\left\|\prod_{j=1}^k e^{- s_j A(t_j )}\right \| \leq M e^{\omega_0 \sum_{j=1}^k s_j }, \qquad s_j \geq 0, \qquad 0 \leq t_1 \leq t_2 \leq  \dots \leq T.
$$
\item[(H2)] There is a family $\{ Q(t)\}_{t\in [0,T]}$ of isomorphisms of $Y$ onto $X$ such that for every $y\in Y$, $Q(t)v$ is continously differentiable in $X$ on $[0,T]$ and
$$
Q(t) A(t) Q(t)^{-1}=A(t)+C(t)
$$
where $C(t)$, $0 \leq t \leq T$, is a strongly continuous family of bounded operators on $X$.
\item[(H3)] For $t\in [0,T]$, $Y \subset D(A(t))$, $A(t)$ is a bounded operator from $Y$ into $X$ and $t \to A(t)$ is continuous in the $\calL(Y,X)$ norm.
\end{itemize}
We then have the following result, see \cite{katolinear}, or \cite{pazy}, Chapter 5, Theorems 2.2 and 4.6:
\begin{theorem}\label{kato}
Assume that (H-1)-(H-2)-(H-3) are verified. Then, there exists a unique evolution operator $U=\{U(t,s)\}$, defined on the triangle $\Delta_T:T\geq t\geq s\geq 0$ such that
\begin{itemize}
\item[(a)] $U$ is strongly continuous on $\Delta_T$ to $\calL(X)$, with $U(s,s)=I$,
\item[(b)] $U(t,r)U(r,s)=U(t,s)$,
\item[(c)] $U(t,s) Y \subset Y$, and $U$ is strongly continuous on $\Delta_T$ to $\calL(Y)$,
\item[(d)] $dU(t,s)/dt=-A(t)U(t,s)$,  $dU(t,s)/ds=U(t,s)A(s)$, which exist in the strong sense in $\calL(Y,X)$, and are strongly continuous $\Delta_T$ to $\calL(Y,X)$.
\end{itemize}
\end{theorem}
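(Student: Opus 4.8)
The statement is the classical Kato theory of hyperbolic evolution equations, so the plan is simply to recall the construction of \cite{katolinear} and \cite{pazy}, Chapter~5, indicating where each of (H1)--(H3) enters. First I would build an approximate evolution family by freezing the generator. For $n\in\Nm$ partition $[0,T]$ into intervals $I_j^n=[t_{j-1}^n,t_j^n)$ of length $T/n$, replace $A(t)$ on $I_j^n$ by the constant operator $A(t_{j-1}^n)$, and concatenate the corresponding semigroups $e^{-(t-s)A(t_{j-1}^n)}$ in the order dictated by the ordering $0\le t_1\le\dots$ in (H1). This defines operators $U_n(t,s)$ on $X$ with $U_n(s,s)=I$ and $U_n(t,r)U_n(r,s)=U_n(t,s)$, and the stability estimate (H1) gives at once the uniform bound $\|U_n(t,s)\|\le M e^{\omega_0(t-s)}$ on $\Delta_T$, independently of $n$.

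Next I would show that $U_n(t,s)$ converges strongly on $X$. Here (H2) is what makes $Y$ an admissible subspace: conjugating by $Q(t)$ turns $A(t)$ into $A(t)+C(t)$ with $C(t)$ bounded and strongly continuous, so $\{A(t)+C(t)\}$ is again a stable family in the sense of (H1) and one obtains a uniform bound for $U_n(t,s)$ in $\calL(Y)$. Combining this with (H3) (that $t\mapsto A(t)$ is continuous in $\calL(Y,X)$), a telescoping argument --- writing $U_n(t,s)y-U_m(t,s)y$ as a sum over partition points of terms of the form $U_m(t,\cdot)\bigl(A(\cdot)-A(\text{frozen})\bigr)U_n(\cdot,s)y$ with $y\in Y$ --- shows that $\{U_n(t,s)y\}$ is Cauchy in $X$, uniformly on $\Delta_T$. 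Denseness of $Y$ in $X$ together with the uniform $\calL(X)$ bound then yields strong convergence on all of $X$ to a limit $U(t,s)$ inheriting (a) and (b), while the analogous argument carried out in the $\calL(Y)$ norm gives (c).

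For the differentiability (d) I would differentiate the approximants: on each frozen subinterval $\tfrac{d}{dt}U_n(t,s)=-A(t_{j-1}^n)U_n(t,s)$, and passing to the limit using the continuity of $A(\cdot)$ in $\calL(Y,X)$ and the strong convergence of $U_n$ in $\calL(Y)$ gives $\tfrac{d}{dt}U(t,s)y=-A(t)U(t,s)y$ for $y\in Y$; the derivative in $s$ is handled in the same way, and strong continuity of both derivatives on $\Delta_T$ follows from (c) and (H3). Uniqueness is then the standard argument: given two evolution operators $U,\tilde U$ satisfying (d), fix $y\in Y$ and differentiate $\sigma\mapsto U(t,\sigma)\tilde U(\sigma,s)y$ on $[s,t]$; by (d) the derivative equals $U(t,\sigma)A(\sigma)\tilde U(\sigma,s)y-U(t,\sigma)A(\sigma)\tilde U(\sigma,s)y=0$ in $X$, so $U(t,s)y=\tilde U(t,s)y$, and density of $Y$ in $X$ closes the case.

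The main obstacle is the second step: obtaining invariance of $Y$ under the approximants with \emph{$n$-independent} bounds and the Cauchy property in the strong topology of $\calL(Y,X)$. This is precisely the point where the intertwining identity $Q(t)A(t)Q(t)^{-1}=A(t)+C(t)$ does the work, transporting the stability estimate from $X$ up to $Y$; without it one controls $U_n$ only on $X$, which is insufficient to pass to the limit in the differential equation. In the application of Section~\ref{secmag} one has (roughly) $X=H^{q-2}$, $Y=H^{q}$, $A(t)=i\Delta_{B_t^H}$, and $Q(t)$ built from the gauge factor $e^{\pm iB_t^H}$ composed with $(1-\Delta)$, so that verifying (H1)--(H3) reduces to elliptic estimates together with the almost sure $\calC^{0,\gamma}$ regularity of $B^H$ recorded in Section~\ref{prelim}.
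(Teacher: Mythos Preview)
The paper does not prove Theorem~\ref{kato}; it is quoted as a classical result with references to \cite{katolinear} and \cite{pazy}, Chapter~5, Theorems~2.2 and~4.6. Your outline faithfully sketches the standard Kato--Pazy construction (piecewise-constant approximants, stability from (H1), $Y$-invariance via the intertwining in (H2), Cauchy property from (H3), and uniqueness by differentiating $\sigma\mapsto U(t,\sigma)\tilde U(\sigma,s)y$), so there is nothing to compare on the level of the theorem itself.

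Your final paragraph about the application, however, misidentifies the choices made in Section~\ref{secmag}. In Proposition~\ref{geneevol} the paper takes $X=L^2(\Rm^n)$ and $Y=H^{2k}(\Rm^n)$ (not $H^{q-2}$ and $H^q$), and the isomorphism $Q$ is the \emph{time-independent} operator $Q=\Delta_{(k)}+I$ with $\Delta_{(k)}=(-1)^k\sum_j\partial_{x_j}^{2k}$; it is not built from the gauge factor $e^{\pm iB_t^H}$. This matters: the whole point of the paper's choice is that $B_t^H$ is only H\"older in $t$, so a $Q(t)$ involving $e^{iB_t^H}$ would fail the $\calC^1$-in-time requirement of (H2). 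The price paid is extra spatial regularity on $B$ (namely $H^{2k+2}$) so that the commutator $[\Delta_{(k)},L(t)]Q^{-1}$ is bounded on $L^2$. The odd-order spaces $H^{2k-1}$ and $H^0$ are then recovered afterwards by approximation in Proposition~\ref{gene_exist}, not by applying Theorem~\ref{kato} directly with those choices.
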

In the next result, we show that for suitable functions $B$, the operator $i \Delta_B=i e^{iB} \circ \Delta \circ e^{-iB}$ generates an evolution operator $U$.
\begin{proposition} \label{geneevol} Let $X=L^2(\Rm^n)$ and $Y=H^{2k}(\Rm^n)$, $k \geq 1$, and let $B \in \calC^0(0,T,H^{2k+2}(\Rm^n))$. Then, the operator $i \Delta_B$ generates an evolution operator $U$ satisfying Theorem \ref{kato} and $U$ is an isometry on $L^2(\Rm^n)$.
\end{proposition}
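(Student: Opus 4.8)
The plan is to verify the three hypotheses (H1)--(H3) of Theorem \ref{kato} with $X=L^2(\Rm^n)$, $Y=H^{2k}(\Rm^n)$ and $A(t)=i\Delta_{B_t}=ie^{iB_t}\circ\Delta\circ e^{-iB_t}$, then read off $U$ from that theorem, and finally obtain the isometry property from an energy identity. The first step is purely algebraic. Let $M_t$ denote the multiplication operator $u\mapsto e^{iB_t}u$: since $B_t$ is real it is unitary on $L^2(\Rm^n)$, and since $H^{2k+2}(\Rm^n)\hookrightarrow W^{2k,\infty}(\Rm^n)$ for $n\leq 3$ it restricts to an isomorphism of $H^{2k}(\Rm^n)$ onto itself, with inverse given by multiplication by $e^{-iB_t}$. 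Then $A(t)=M_t(i\Delta)M_t^{-1}$, so $A(t)$ is similar to $i\Delta$ through a bounded invertible operator, $D(A(t))=H^2(\Rm^n)$ independently of $t$, and expanding the conjugation gives $A(t)=i\Delta+R(t)$ with the first-order operator
$$R(t)u=2\nabla B_t\cdot\nabla u-i|\nabla B_t|^2u+(\Delta B_t)u,$$
whose coefficients lie in $H^{2k+1}(\Rm^n)$, $H^{2k+1}(\Rm^n)$ and $H^{2k}(\Rm^n)$ respectively (using that $H^{2k+1}(\Rm^n)$ is a Banach algebra for $n\leq 3$).

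Hypotheses (H1) and (H3) are straightforward. Because $i\Delta$ generates the Schr\"odinger unitary group on $L^2(\Rm^n)$, $-A(t)=M_t(-i\Delta)M_t^{-1}$ generates the $C_0$-group $\{M_te^{-si\Delta}M_t^{-1}\}_{s\in\Rm}$, so each factor $e^{-s_jA(t_j)}$, $s_j\geq 0$, is a composition of three $L^2$-isometries and hence an isometry; the product in (H1) therefore has norm $1$, and (H1) holds with $M=1$, $\omega_0=0$ (the resolvent condition is clear since $\rho(A(t))=\rho(i\Delta)=\Cm\setminus i(-\infty,0]\supset(0,\infty)$). For (H3), $Y=H^{2k}(\Rm^n)\subset H^2(\Rm^n)=D(A(t))$, $A(t)\in\calL(H^{2k}(\Rm^n),L^2(\Rm^n))$ with norm bounded by a polynomial in $\|B_t\|_{H^{2k+2}}$ (each term of $i\Delta+R(t)$ loses at most two derivatives and has bounded coefficients), and $t\mapsto A(t)$ is continuous in $\calL(H^{2k}(\Rm^n),L^2(\Rm^n))$ because $A(t)-A(s)$ has coefficients $\nabla B_t-\nabla B_s$, $|\nabla B_t|^2-|\nabla B_s|^2$, $\Delta B_t-\Delta B_s$, which are small in $L^\infty(\Rm^n)$ by $B\in\calC^0(0,T,H^{2k+2}(\Rm^n))$ and the embedding $H^{2k+2}(\Rm^n)\hookrightarrow W^{2,\infty}(\Rm^n)$.

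The crux is (H2). The natural conjugator $e^{iB_t}(1-\Delta)^ke^{-iB_t}$ would give $C(t)=0$ but is only continuous, not $\calC^1$, in $t$; so instead I would take $Q(t)\equiv Q:=(1-\Delta)^k$, a fixed isomorphism of $H^{2k}(\Rm^n)$ onto $L^2(\Rm^n)$, for which $t\mapsto Qv$ is constant and hence trivially continuously differentiable in $X$. Since $Q$ commutes with $i\Delta$, one gets $QA(t)Q^{-1}=A(t)+C(t)$ on $D(A(t))=H^2(\Rm^n)$ with $C(t):=[Q,R(t)]Q^{-1}$, and everything reduces to showing that $C(t)$ extends to a bounded operator on $L^2(\Rm^n)$, continuous in $t$. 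Expanding $[(1-\Delta)^k,R(t)]$ by the Leibniz rule yields a differential operator of order at most $2k$ whose terms are of the form $(\partial^\beta c_t)\,\partial^{\alpha-\beta}(\cdot)$, where $c_t$ is one of the coefficients of $R(t)$ and $1\leq|\beta|\leq|\alpha|\leq 2k$; applied to $Q^{-1}f=(1-\Delta)^{-k}f\in H^{2k}(\Rm^n)$ with $f\in L^2(\Rm^n)$, each such term is a product of two functions in Sobolev spaces $H^{s}(\Rm^n)$ and $H^{t}(\Rm^n)$ with $s,t\geq 0$, $s+t\geq 2k$, and $\max(s,t)\geq k$. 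A short case distinction then places each term in $L^2(\Rm^n)$: when the larger index is $\geq 2$ one uses $H^2(\Rm^n)\hookrightarrow L^\infty(\Rm^n)$ for $n\leq 3$, and the only borderline case, $k=1$ and $n=3$, is handled by $H^1(\Rm^3)\hookrightarrow L^4(\Rm^3)$ together with $L^4\cdot L^4\hookrightarrow L^2$. This gives $C(t)\in\calL(L^2(\Rm^n))$, with norm controlled by $\|B_t\|_{H^{2k+2}}$; operator-norm continuity of $t\mapsto C(t)$ follows exactly as for $A(t)$, since $C(t)-C(s)=[Q,R(t)-R(s)]Q^{-1}$ and the coefficients of $R(t)-R(s)$ are small in $H^{2k}(\Rm^n)$. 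Hence (H2) holds, and Theorem \ref{kato} produces the unique evolution operator $U=\{U(t,s)\}$ with properties (a)--(d).

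It remains to see that $U$ is an isometry on $L^2(\Rm^n)$. For $y\in Y$ put $w(t)=U(t,s)y$; then $w(t)\in Y\subset H^1(\Rm^n)$ by (c), and by (d) we have $w'(t)=-A(t)w(t)$ in $L^2(\Rm^n)$, so
$$\frac{d}{dt}\|w(t)\|_{L^2}^2=-2\,\Re\big(A(t)w(t),w(t)\big)=-2\,\Re\big(i\Delta(M_t^{-1}w(t)),M_t^{-1}w(t)\big)=0,$$
because $M_t$ is unitary on $L^2(\Rm^n)$ and $(i\Delta v,v)=i\|\nabla v\|_{L^2}^2$ is purely imaginary. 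Hence $\|U(t,s)y\|_{L^2}=\|y\|_{L^2}$ for $y$ in the dense subspace $Y$, and by the strong continuity of $U(t,s)$ on $L^2(\Rm^n)$ from (a) this extends to all of $L^2(\Rm^n)$, proving the claim. The main obstacle in this scheme is the verification of (H2), i.e.\ the commutator bookkeeping showing $C(t)\in\calL(L^2(\Rm^n))$ under the sole assumption $B_t\in H^{2k+2}(\Rm^n)$ with $n\leq 3$, the delicate point being the product $H^1(\Rm^3)\cdot H^1(\Rm^3)\hookrightarrow L^2(\Rm^3)$.
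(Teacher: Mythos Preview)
Your proof is correct and follows essentially the same strategy as the paper: verify Kato's hypotheses (H1)--(H3) with a time-independent isomorphism $Q:H^{2k}\to L^2$, then read off the isometry from $\Re\,(i\Delta_{B_t}\varphi,\varphi)=0$. The only cosmetic differences are that the paper takes $Q=I+(-1)^k\sum_j\partial_{x_j}^{2k}$ instead of your $Q=(1-\Delta)^k$, and establishes (H1) via Kato--Rellich and Stone's theorem rather than via the explicit conjugation $e^{-sA(t)}=M_te^{-si\Delta}M_t^{-1}$; the commutator bookkeeping and the Sobolev product estimates (including the borderline $H^1(\Rm^3)\cdot H^1(\Rm^3)\hookrightarrow L^2(\Rm^3)$ case) are handled in the same spirit in both.
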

\begin{proof} We verify hypotheses (H-1)-(H-2)-(H-3) for $A(t)=i \Delta_B$. Let $\Delta_{B_t} := \Delta +L(t)$ with 
\be \label{defL}
L(t)=-2i \nabla B_t \cdot \nabla  -|\nabla B_t|^2  -i \Delta B_t .\ee
 First, for $t$ fixed in $[0,T]$, the Kato-Rellich theorem \cite{RS-80-2} yields that $\Delta_{B_t}$ is self-adjoint on $D(\Delta)=H^2(\Rm^n)$.  Indeed, using the regularity $B \in \calC^0([0,T],H^4(\Rm^n))$, it is straightforward to verify that $L(t)$ is symmetric and $\Delta$-bounded with relative bound strictly less than one. We also obtain that $D(\Delta_{B_t})=H^2(\Rm^n)$, $\forall t \in [0,T]$. Stone's theorem \cite{RS-80-I} then implies that for $t$ fixed, $i \Delta_{B_t}$ is the generator of a  $C_0$ unitary group on $X$. Moreover, $-\Delta_{B_t}$ is positive, so that the spectrum of $i \Delta_{B_t}$ lies in $i[0,\infty)$. We therefore conclude that the family $\{i \Delta_{B_t} \}_{t\in [0,T]}$ satisfies hypothesis (H-1).

Regarding (H-2), let $Q=\Delta_{(k)}+I$, where $I$ is the identity operator and 
$$\Delta_{(k)}=(-1)^k \sum_{j=1}^n \partial^{2k}_{x_j^{2k}}, \qquad k \geq 1.$$The operator $Q$ is a positive definite self-adjoint operator on $H^{2k}(\Rm^n)$, and an isomorphism from $H^{2k}(\Rm^n)$ to $L^2(\Rm^2)$. It is also obviously continuously differentiable since it does not depend on $t$. Moreover,
$$
Q \Delta_{B_t} Q^{-1}=\Delta_{B_t}+[Q,\Delta_{B_t}]Q^{-1},
$$
where $[A,B]$ denotes the commutator between two operators $A$ and $B$. We have the following Lemma:
\begin{lemma} For $k \geq 1$, let $B \in \calC^0([0,T],H^{2k+2}(\Rm^n))$. Then $[Q,\Delta_{B_t}]Q^{-1} \in \calL(L^2(\Rm^n))$. 
\end{lemma}

\begin{proof}
We have $[Q,\Delta_{B_t}]Q^{-1}=[\Delta_{(k)},L(t)]Q^{-1}$, and using the product rule
\begin{align*}
&[\Delta_{(k)},L(t)]=\\
&(-1)^k \sum_{j=1}^n \sum_{p=0}^{2k-1} 
\left(
\begin{array}{c}
2k \\p
\end{array}
\right)
\left(-2i \{\nabla \partial^{2k-p}_{x^{2k-p}_j} B \}\cdot \nabla \partial^p_{x^p_j} - \{\partial^{2k-p}_{x^{2k-p}_j}|\nabla B|^2 \}\partial^p_{x^p_j}-i\{ \Delta \partial^{2k-p}_{x^{2k-p}_j} B  \}\partial^p_{x^p_j} \right)
\end{align*}
where $\tiny{\left(
\begin{array}{c}
2k \\p
\end{array}
\right)}$ is the binomial coefficient and there are as usual no terms corresponding to $p=2k$ because of the commutator. Using Standard Sobolev embeddings for $H^{2k+2}(\Rm^n)$ when $n \leq 3$, we have for $j=1,\dots,n$ that $\partial^{2k+1}_{x^{2k+1}_j} B \in L^\infty_t L^p_x$ for $p=6$, $p<\infty$ and $p=\infty$ when $n=3,2,1$, respectively, and $\partial^q_{x^q_j} B \in L^\infty_t L^\infty_x$ for $q \leq 2k$. Together with the fact that $Q^{-1}$ is an isomorphism from $L^2(\Rm^n)$ to $H^{2k}(\Rm^n) \subset W^{2k-2,\infty}(\Rm^n)$, this is enough to insure that $[Q,\Delta_B]Q^{-1} \in \calL(L^2(\Rm^n))$.
\end{proof}

\bigskip

Hypothesis (H-2) is then verified with $C(t)=[Q,\Delta_{B_t}]Q^{-1}$, the strong continuity of $C$ following from the continuity of $B$.

Finally, (H-3) follows easily from $H^{2k}(\Rm^n) \subset D(\Delta_{B_t})=H^2(\Rm^n)$, $k\geq 1$, and that $B \in \calC^0(0,T,H^{2k+2}(\Rm^n))$. We can thus apply Theorem \ref{kato} and obtain the existence of an evolution group $U$ generated by $i \Delta_{B_t}$. The fact that $U$ is an isometry on $L^2(\Rm^n)$ is a consequence of $\Re i (\Delta_{B_t} \varphi,\varphi)=0$ for every $\varphi \in H^2(\Rm^n)$.
\end{proof}

% \begin{proposition} \label{geneevol2} Let $X=L^2(\Omega \times \Rm^n)$ and $Y=L^2(\Omega,H^{2k}(\Rm^n))$, $k \geq 1$, and let $B \in \calC^0(0,T,L^2(\Omega,H^{2k+2}(\Rm^n)))$. Then, the operator $i \Delta_B$ generates an evolution operator $U$ satisfying Theorem \ref{kato}.
% \end{proposition}
% \begin{proof}
% The proof is very similar to that of Proposition \ref{geneevol}, so we only underline the main differences. We show first that $t$ a.e. $L(t)$ is $\Delta-$ bounded with relative bound strictly less than one. Let $\varphi \in Y$. Then
% $$
% \| L(t) \varphi \| \leq c \| \nabla B\|_{L^2}
% $$  
% \end{proof}
% \bigskip

\begin{remark} When $B \in \calC^1(0,T,H^2(\Rm^n))$, a classical choice \cite{pazy} for $Q$ is $Q(t)= \lambda I-A(t)$ for $\lambda$ in the resolvent set of $A$. This allows to lower the spatial regularity of $B$ but is not verified when $B=B_t^H$. Notice that in the case when $B=B_t^H$, Proposition \ref{geneevol} can likely be improved in terms of the required spatial regularity of $B$ since we have not used the H\"older regularity in time of $B_t^H$ at all.  
\end{remark}
\subsection{Application to the magnetic Schr\"odinger equation}
We apply now the result of the preceeding section to the differential equation 
\be \label{eqgene}
\partial_t u=i \Delta_Bu+f, \qquad 0<t\leq T, \qquad u(0)=v,
\ee
where $\Delta_B=e^{i B} \circ \Delta \circ \,e^{-iB}$. As for \fref{SSE}, we say that $u \in \calC^0(0,T,H^{q}(\Rm^n)) \cap\calC^1(0,T,H^{q-2}(\Rm^n))$, $q$ non-negative integer, is a solution to \fref{eqgene} if it verifies for all $w\in \calC^1(0,T,H^{q+2}(\Rm^n))$, for all $t \in [0,T]$:
\begin{align} \label{defsol2} \nonumber
&\left( u(t), w(t)   \right)-\left( v, w(0) \right)
=\int_0^t  \left( u(s), \partial_s w(s) \right) ds
\\
& \qquad \qquad -i\int_0^t \left( u(s),\Delta_B w(s)\right) ds  +i\int_0^t \left(f,w(s) \right) ds. 
\end{align}
% for all test function $w \in H^{q+2}(\Rm^n)$ and all $t \in [0,T]$:
% \begin{align*}
% &\left\langle u(t), w \right\rangle_{H^{-q},H^q}-\langle v, w \rangle_{H^{-q},H^q}=i\left\langle\int_0^t \Delta_B  u(s)ds, w \right\rangle_{H^{-q},H^q}+\left\langle\int_0^t f(s)ds, u \right\rangle_{H^{-q},H^q},
% \end{align*}
% where $\Delta_B  u$ is understood in the appropriate weak sense. We have the following result: ISOMETRY U
We have the following result:
\begin{proposition} \label{gene_exist}
Let $B \in \calC^0(0,T,H^{q+4}(\Rm^n))$, $q$ non-negative integer, and denote by $U$ the evolution operator of Proposition \ref{geneevol}. Then, for every $v \in H^{q}(\Rm^n)$ and $f \in \calC^0(0,T,H^{q}(\Rm^n))$, the function
\be \label{rep}
u(t)=U(t,0) v+\int_0^t U(t,s) f(s)ds
\ee
belongs to $\calC^0(0,T,H^{q}(\Rm^n)) \cap\calC^1(0,T,H^{q-2}(\Rm^n))$ and is the unique solution to \fref{eqgene}. Moreover, $u$ satisfies the estimate, for all $t \in [0,T]$:
\be \label{estimu}
\|u(t)\|_{H^q} \leq C \| v\|_{H^q}+C\int_0^t \| f(s) \|_{H^q}ds,
\ee
where the constant $C$ depends on $\| B\|_{\calC^0(0,T,H^{q+4}(\Rm^n))}$ when $q\neq 0$.
\end{proposition}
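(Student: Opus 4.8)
The plan is to proceed in three stages: regularity of the representation formula \fref{rep}, verification that it solves the weak formulation \fref{defsol2}, uniqueness, and finally the energy-type estimate \fref{estimu}.

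\emph{Step 1: Regularity.} By Theorem \ref{kato}(a),(c) the evolution operator $U$ is strongly continuous on $\Delta_T$ both to $\calL(X)$ with $X=L^2$ and to $\calL(Y)$ with $Y=H^{2k}$; taking $2k\geq q+1$ (so in particular $H^{2k}\subset H^q$) and interpolating, I would first argue that $U$ is strongly continuous on $\Delta_T$ to $\calL(H^q)$ — this needs the observation that the bounds in (H-1)–(H-2), being uniform in $t$, give $\sup_{\Delta_T}\|U(t,s)\|_{\calL(H^{2k})}<\infty$ and $\sup_{\Delta_T}\|U(t,s)\|_{\calL(L^2)}=1$, so a standard interpolation of operators yields a uniform bound on $\|U(t,s)\|_{\calL(H^q)}$, and strong continuity on $H^q$ follows by density of $H^{2k}$ in $H^q$ together with the uniform bound. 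Given this, $t\mapsto U(t,0)v\in\calC^0(0,T,H^q)$ for $v\in H^q$, and $t\mapsto\int_0^t U(t,s)f(s)\,ds\in\calC^0(0,T,H^q)$ for $f\in\calC^0(0,T,H^q)$ by the usual continuity argument for Bochner integrals of jointly continuous integrands (splitting $\int_0^{t'}-\int_0^t$ and using uniform continuity of $(t,s)\mapsto U(t,s)f(s)$). For the $\calC^1(0,T,H^{q-2})$ regularity I would differentiate \fref{rep} using Theorem \ref{kato}(d): $\frac{d}{dt}U(t,0)v=i\Delta_B U(t,0)v$, valid in $\calL(Y,X)$ hence in $H^{q-2}$ after noting $\Delta_B\in\calL(H^q,H^{q-2})$ (which uses $B\in\calC^0(0,T,H^{q+4})$ and Sobolev embedding for $n\leq3$ to control the coefficients $-2i\nabla B\cdot\nabla-|\nabla B|^2-i\Delta B$ of $L(t)$ as multipliers $H^q\to H^{q-2}$), and similarly $\frac{d}{dt}\int_0^t U(t,s)f(s)\,ds=f(t)+i\Delta_B\int_0^t U(t,s)f(s)\,ds$ by the Leibniz rule for evolution operators; continuity in $t$ of the right-hand side into $H^{q-2}$ follows from Step-1 continuity in $H^q$ and boundedness of $\Delta_B$.

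\emph{Step 2: $u$ solves \fref{defsol2}, and uniqueness.} Given $w\in\calC^1(0,T,H^{q+2})$, I would compute $\frac{d}{dt}(u(t),w(t))=(\partial_t u,w)+(u,\partial_t w)$ — legitimate since $\partial_t u\in\calC^0(0,T,H^{q-2})$ pairs with $w\in\calC^0(0,T,H^{q+2})$ — and substitute $\partial_t u=i\Delta_B u+f$ from Step 1, then integrate from $0$ to $t$; the term $(i\Delta_B u,w)$ is rewritten as $-i(u,\Delta_B w)$ by the integration-by-parts/adjoint relation $(\Delta_B u,w)=(u,\Delta_B w)$ valid for $u\in H^q$, $w\in H^{q+2}$ (this is the formal self-adjointness of $\Delta_B$, which one checks by writing $\Delta_B=e^{iB}\Delta e^{-iB}$ and integrating by parts twice, using that $B$ is real-valued). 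This gives exactly \fref{defsol2}. For uniqueness, if $u_1,u_2$ both solve \fref{defsol2} with the stated regularity, set $z=u_1-u_2\in\calC^0(0,T,H^q)\cap\calC^1(0,T,H^{q-2})$; it satisfies \fref{defsol2} with $v=0$, $f=0$, and by a density argument (the equation extends to $w\in\calC^1(0,T,H^{q+2})$ and, after checking the pairings still make sense, to test functions of the form $w(s)=U(t,s)^*\psi$ — or more cleanly one shows $z$ solves $\partial_t z=i\Delta_B z$ in $\calC^0(0,T,H^{q-2})$ by reversing the computation above and using arbitrariness of $w$), one concludes $\frac{d}{ds}(U(t,s)z(s))=0$ in $H^{q-2}$, hence $z(t)=U(t,0)z(0)=0$.

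\emph{Step 3: The estimate \fref{estimu}.} From \fref{rep} and the uniform operator bound $\|U(t,s)\|_{\calL(H^q)}\leq C$ (with $C$ depending on $\|B\|_{\calC^0(0,T,H^{q+4})}$ when $q\neq 0$, since the $H^{2k}\to H^{2k}$ bound comes through (H-2) via $C(t)=[Q,\Delta_B]Q^{-1}$ whose norm is controlled by $\|B\|_{\calC^0(0,T,H^{2k+2})}$, and interpolation down to $H^q$ inherits this dependence; for $q=0$, $\|U\|_{\calL(L^2)}=1$) one gets immediately $\|u(t)\|_{H^q}\leq C\|v\|_{H^q}+C\int_0^t\|f(s)\|_{H^q}\,ds$. \emph{The main obstacle} I anticipate is Step 1: Theorem \ref{kato} only directly delivers continuity/bounds on $U$ in the two endpoint spaces $L^2$ and $H^{2k}$, so getting clean control in the intermediate space $H^q$ (for general non-even $q$, and with the correct constant dependence on $\|B\|$) requires a careful interpolation argument and a verification that the evolution-operator bounds from (H-1)–(H-2) are genuinely uniform in $(t,s)\in\Delta_T$; one must also be slightly careful that $2k\geq q+1$ is compatible with the hypothesis $B\in\calC^0(0,T,H^{q+4})$, i.e. that the regularity of $B$ needed by Proposition \ref{geneevol} with this $k$ (namely $H^{2k+2}$) is available, which it is since $2k+2\leq q+4$ for the minimal choice $2k=q+1$ or $2k=q+2$.
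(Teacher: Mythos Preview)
Your approach is essentially correct but takes a genuinely different route from the paper's. You obtain $\calL(H^q)$-control of $U$ by interpolating between the $L^2$ isometry and the $\calL(H^{2k})$ bound coming from Theorem~\ref{kato}(c), then extend the differentiation identity $\partial_t U(t,0)v=i\Delta_B U(t,0)v$ from $v\in H^{2k}$ to $v\in H^q$ by density (via its integrated form, read in $H^{q-2}$). The paper instead handles only even $q\geq 2$ directly from Kato's Theorem~II, and for odd $q$ and $q=0$ regularizes the data to $(B_\eps,v_\eps,f_\eps)$, produces classical solutions $u_\eps$, differentiates the equation in space and controls the commutator $[D^\beta,L_\eps(s)]$ in $L^2$ to get uniform $H^q$ bounds by Gronwall, and finally passes to the limit using Kato's stability result $U_\eps\to U$ in $\calL(L^2)$. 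Your route is shorter and sidesteps the approximation and stability machinery; the paper's commutator argument is more explicit about how the constant in \fref{estimu} depends on $\|B\|_{\calC^0(0,T,H^{q+4})}$. Two places in your sketch deserve a line of justification: the ``hence in $H^{q-2}$'' for the time-derivative of $U(t,0)v$ is not automatic from Theorem~\ref{kato}(d) and genuinely needs the integrated-identity-plus-density step; and your uniqueness via $\frac{d}{ds}\big(U(t,s)z(s)\big)=0$ requires $U(t,s)$ to act boundedly on $H^{q-2}$, which for $q\leq 1$ is a negative-order space and calls for a duality extension of $U$ (using the group property of $i\Delta_B$)---the paper avoids this for $q=0$ by the adjoint-equation test function you also mention as an alternative.
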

\begin{proof}
Consider first the case $q=2k$ with $k\geq 1$. The result then follows from \cite{katolinear}, Theorem II and the equation  \fref{eqgene} in order to obtain the regularity on $\partial_t u$. The cases $q=2k-1$, $k \geq 1$, and $q=0$  are treated by approximation: choose for instance sequences $B_\eps \in \calC^0(0,T,H^{q+9}(\Rm^n))$, $v_\eps\in H^{q+5}(\Rm^n)$ and $f_\eps \in \calC^0(0,T,H^{q+5}(\Rm^n))$ such that as $\eps \to 0$:
\bea
\label{convB1} B_\eps &\to&B \qquad \textrm{ in} \quad \calC^0(0,T,H^{q+4}(\Rm^n))\\
\label{convv} v_\eps &\to&v \qquad \textrm{ in} \quad H^{q}(\Rm^n)\\
\label{convf} f_\eps &\to&f \qquad \textrm{ in} \quad \calC^0(0,T,H^{q}(\Rm^n)).
\eea
Applying the result when $q=2k$ with $k\geq 1$, the corresponding smooth solution $u_\eps$ to \fref{eqgene} when $q=2k-1$ belongs to  $\calC^0(0,T,H^{2k+4}(\Rm^n))$ with $\partial_t u_\eps \in \calC^1(0,T,H^{2k+2}(\Rm^n))$, with the convention that $k=\frac{1}{2}$ when $q=0$. In order to pass to the limit, it is proven in \cite{katolinear}, Theorem V,  that if $i \Delta_{B_\eps}$ converges to $i \Delta_{B}$ in $\calL(H^2(\Rm^n),L^2(\Rm^n))$ a.e. $t$, and $\|\Delta_{B_\eps}\|_{\calL(H^2(\Rm^n),L^2(\Rm^n))} $ is uniformly bounded in $t$ independently of $\eps$, then 
\be \label{convU}
U_\eps(t,s) \to U(t,s) \quad \textrm{in} \quad \calL(L^2(\Rm^n)) \quad \textrm{uniformly in } (t,s),
\ee
where $U$ is the evolution operator associated to $B$. These latter conditions are direcly satisfied because of \fref{convB1}. We then write:
\bee
u_\eps(t)&=& U_\eps(t,0)v_\eps+\int_0^t U_\eps(t,s)f_\eps(s) ds, \qquad \forall t \in [0,T]\\
&=&U(t,0)v+\int_0^t U(t,s)f(s) ds+R^1_\eps+R^2_\eps=u+R^1_\eps+R^2_\eps\\
R^1_\eps&=&U_\eps(t,0)(v_\eps-v)+\int_0^t U_\eps(t,s) (f_\eps(s)-f(s))\\
R^2_\eps&=&(U_\eps(t,0)-U(t,0))v+\int_0^t (U_\eps(t,s)-U(t,s)) f(s)) ds.
\eee
Using \fref{convU} and the strong convergence of $v_\eps$ and $f_\eps$, we then obtain that $u_\eps \to u$ in $\calC^0(0,T,L^2(\Rm^n))$. Assume first that $q \neq 0$. In order to get the announced better regularity on $u$, we use the fact that $u_\eps \in \calC^0(0,T,\calC^{2k+2}(\Rm^n))$ and $\partial_t u_\eps \in \calC^1(0,T,\calC^{2k}(\Rm^n))$ thanks to standard Sobolev embeddings for $n \leq 3$. We can then differentiate equation \fref{geneevol}, and find using the representation formula
\be \label{eqDbeta}
 D^\beta u_\eps(t)= U_\eps(t,0) D^\beta v_\eps+\int_0^t U_\eps(t,s)(D^\beta f_\eps(s)+[D^\beta,L_\eps(s)]u_\eps(s)) ds,
\ee
where $1\leq |\beta| \leq q$ and 
$$D^\beta:=\frac{\partial^{\beta_1}}{\partial x_1^{\beta_1}} \times \cdots \times \frac{\partial^{\beta_n}}{\partial x_n^{\beta_n}}, \qquad \beta=(\beta_1,\cdots,\beta_n), \quad |\beta|=\beta_1 + \cdots + \beta_n,$$
 and  $L_\eps(s)$ is defined in \fref{defL} with $B$ replaced by $B_\eps$. Only the term involving the commutator requires some attention. Using \fref{convB1}, we can show that for all $s \in [0,T]$,
$$
\| [D^\beta,L_\eps(s)]u_\eps(s)\|_{L^2} \leq C \|u_\eps(s)\|_{H^{|\beta|}},
$$
where the constant $C$ is independent of $\eps$. Together with \fref{convB1}-\fref{convv}-\fref{convf}-\fref{convU}-\fref{eqDbeta} and the Gronwall lemma, this yields a uniform bound for $u_\eps$ in $\calC^0(0,T,H^q(\Rm^n))$. Using this latter bound along with \fref{convB1}-\fref{convv}-\fref{convf}-\fref{convU}-\fref{eqDbeta} and equation \fref{eqgene} for the smooth solution $u_\eps$ in order to estimate $\partial_t u_\eps$, it is then not difficult to show that $(u_\eps)_\eps$ is a Cauchy sequence in $\calC^0(0,T,H^{q}(\Rm^n)) \cap\calC^1(0,T,H^{q-2}(\Rm^n))$, whose limit $u$ satisfies estimate \fref{estimu} and  \fref{defsol2}. When $q=0$, it suffices to use equation \fref{eqgene} for the smooth solution $u_\eps$ in order to show that $(\partial_t u_\eps)_\eps$ is Cauchy in $\calC^0(0,T,H^{-2}(\Rm^n))$. This proves the existence, the representation formula \fref{rep} and estimate \fref{estimu}.

Uniqueness is straightforward in the case $q \geq 1$ since solutions to \fref{eqgene} are regular enough to be used as test functions and to obtain after an integration by part that $\Im (\nabla (e^{-i B_t} u),\nabla (e^{-i B_t} u ))=0$. When $q=0$, we use the adjoint formulation of \fref{eqgene}. The difference between two solutions to \fref{eqgene} satisfies in the case of a test function $w \in \calC^0(0,T,H^2(\Rm^n)) \cap \calC^1(0,T,L^2(\Rm^n))$,
\begin{align*}
&\left( u(t), w(t)\right)=\int_0^t \left ( u(s), \partial_s w(s)+(i\Delta_{B_s})^* w(s) \right) ds,
\end{align*}
where $(i\Delta_{B_s})^*=-i\Delta_{B_s}$ is the adjoint of $i \Delta_{B_s}$. Let $t \in [0,T]$, pick some $w_0 \in H^2(\Rm^n)$ and let $w(s)=z(t-s)$ where $z(s)$ is the solution to $\partial_s z(s)=(i \Delta_{B_{t-s}})^* z(s)$, $z(0)=w_0$, $0<s<t$. Adapting Proposition \ref{geneevol} to the operator $(i\Delta_{B_s})^*$ , Theorem \ref{kato} yields that $z \in \calC^0(0,T,H^2(\Rm^n)) \cap \calC^1(0,T,L^2(\Rm^n))$. Hence, $\partial_s w(s)+(i\Delta_{B_s})^* w(s)=0$ $x$ a.e., $w(t)=w_0$ and it comes, for all $t \in [0,T]$:
$$
\left( u(t), w_0 \right)=0, \qquad \forall w_0 \in H^2(\Rm^n),
$$
so that $u=0$. This ends the proof.
\end{proof}
\bigskip

We use the result of the last Proposition to prove that the non-linear magnetic Schr\"odinger equation 
\be \label{mag}
\partial_t \varphi=i \Delta_{B^H_t} \varphi +g(\varphi), \qquad 0<t\leq T, \qquad u(0)=\Psi_0,
\ee
admits a unique solution $\P$ almost surely in the same sense as \fref{defsol2}:
\begin{theorem} \label{th_mag}
Assume that \textbf{H} is satisfied. Suppose moreover that \fref{assumQ} is verified for $V=H^{q+4}(\Rm^n)$, $q$ non-negative integer. Then, for every $\Psi_0 \in H^q(\Rm^n)$, there exists a maximal existence time $T_M>0$ and a unique function $\varphi \in \calC^0(0,T_M,H^{q}(\Rm^n)) \cap\calC^1(0,T_M,H^{q-2}(\Rm^n))$ verifying \fref{mag} for $t\in[0,T_M]$ which admits the following representation formula:
%\be \label{repre_th}
$$
\varphi(t)= U(t,0) \Psi_0+ \int_0^t U(t,s) g(\varphi(s))ds,
$$
%\ee
where $U=\{U(t,s)\}$ is the evolution operator generated by the operator
$$
i\Delta_{B_t^H} =i e^{i B_t^H}\circ \Delta \circ e^{-i B_t^H}.
$$
If moreover $\Im g(\varphi) \overline{\varphi}=0$, then for all $t\in[0,T_M]$
\be \label{charge}
\| \varphi(t)\|_{L^2}=\| \varphi(0)\|_{L^2}.
\ee
If $g$ is globally Lipschitz on $H^q(\Rm^n)$, then the solution exists for all time $T<\infty$.
\end{theorem}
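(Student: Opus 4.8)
The plan is to reduce the problem to a deterministic one and then solve it by a contraction argument on the Duhamel formulation of \fref{mag}. First I would fix $\omega$ in the almost sure event on which $B^H(\cdot,\omega)$ belongs to $\calC^{0,\gamma}(0,T,V)$ with $V=H^{q+4}(\Rm^n)$ (such $\omega$ form a set of full measure by \fref{defK}), so that with $B=B^H_t$ the hypotheses of Propositions \ref{geneevol} and \ref{gene_exist} are met. This hands us, pathwise, the evolution operator $U$, the representation formula \fref{rep}, and the estimate \fref{estimu} for the linear magnetic Schr\"odinger equation, not only at regularity level $q$ but, since $H^{q+4}\hookrightarrow H^{p+4}$ for every $p\le q$, at each lower level $p$ as well (with $U(t,s)$ restricting to a bounded strongly continuous family on $H^p$). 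All that remains is to handle the nonlinearity $g$.

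Let $C$ be the constant of \fref{estimu} at level $q$, set $M=2C\|\Psi_0\|_{H^q}$, and let $p\in\{0,\dots,q\}$, $C_M$, $C'_M$ be as in \textbf{H}. For $T'\in(0,T]$ to be fixed, I would work on
$$
E=\big\{\varphi\in\calC^0(0,T',H^p(\Rm^n)):\ \|\varphi(t)\|_{H^q}\le M\ \text{ for all }t\in[0,T']\big\},
$$
with distance $d(\varphi_1,\varphi_2)=\sup_{t\in[0,T']}\|\varphi_1(t)-\varphi_2(t)\|_{H^p}$; by weak compactness in $H^q$ and lower semicontinuity of the $H^q$ norm, $E$ is closed in $\calC^0(0,T',H^p)$, hence a complete metric space. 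On $E$ set $\Phi(\varphi)(t)=U(t,0)\Psi_0+\int_0^t U(t,s)g(\varphi(s))\,ds$. The growth bound of \textbf{H} gives $\|g(\varphi(s))\|_{H^q}\le C_MM$, so \fref{estimu} yields $\|\Phi(\varphi)(t)\|_{H^q}\le C\|\Psi_0\|_{H^q}+CC_MMT'\le M$ as soon as $CC_MT'\le\tfrac12$, and $\Phi(\varphi)\in\calC^0(0,T',H^q)$ by strong continuity of $U$ on $H^q$ (Proposition \ref{gene_exist}) and dominated convergence, so that $\Phi(E)\subset E$. For the contraction one writes $\Phi(\varphi_1)(t)-\Phi(\varphi_2)(t)=\int_0^t U(t,s)\big(g(\varphi_1(s))-g(\varphi_2(s))\big)ds$, and the level-$p$ version of \fref{estimu} together with the Lipschitz bound of \textbf{H} gives $d(\Phi(\varphi_1),\Phi(\varphi_2))\le C_pC'_MT'\,d(\varphi_1,\varphi_2)$; shrinking $T'$ so that $C_pC'_MT'<1$, Banach's fixed point theorem produces a unique fixed point $\varphi\in E$.

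Then I would upgrade the regularity and read off the rest. Since $\varphi=\Phi(\varphi)$, it lies in $\calC^0(0,T',H^q(\Rm^n))$; hence $g(\varphi)\in\calC^0(0,T',H^q)$ and Proposition \ref{gene_exist} applied with $f=g(\varphi)$ shows $\varphi\in\calC^0(0,T',H^q)\cap\calC^1(0,T',H^{q-2})$, that $\varphi$ satisfies the magnetic analogue of \fref{defsol2}, and that the stated representation formula holds. Uniqueness in this class (not just within $E$) follows by expressing the difference of two solutions as $\int_0^t U(t,s)(g(\varphi_1(s))-g(\varphi_2(s)))ds$, estimating in $H^p$ via \fref{estimu}, and applying Gronwall. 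Because the existence time $T'$ depends only on $\|\Psi_0\|_{H^q}$, restarting the construction and patching by uniqueness defines a maximal time $T_M\in(0,\infty]$ with $\lim_{t\to T_M}\|\varphi(t)\|_{H^q}=+\infty$ whenever $T_M<\infty$. If $g$ is globally Lipschitz on $H^q$, then $C_M$, $C'_M$ may be taken independent of $M$, so $g$ has at most linear growth in $H^q$ and \fref{estimu} combined with Gronwall bounds $\|\varphi(t)\|_{H^q}$ on every $[0,T]$, excluding blow-up and giving global existence. Finally, if $\Im g(\varphi)\overline{\varphi}=0$, then for $q\ge2$ the solution is classical, and pairing \fref{mag} with $\varphi$ in $L^2$, taking the real part, using the self-adjointness of $\Delta_{B^H_t}$ (Proposition \ref{geneevol}) to annihilate the linear term and the hypothesis to annihilate the nonlinear one, gives $\tfrac{d}{dt}\|\varphi(t)\|_{L^2}^2=0$; for $q=0,1$ one approximates $\Psi_0$ in $H^q$ by $H^2$ data, applies the identity to the smooth solutions, and passes to the limit using the continuous dependence just established together with $\|U(t,s)\|_{\calL(L^2)}=1$.

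The main obstacle is the mismatch between the two exponents in \textbf{H}: $g$ is bounded but not Lipschitz at the regularity level $q$ at which the solution lives, and is Lipschitz only in the weaker norm $H^p$. This forces the contraction to be run in the $\calC^0(0,T',H^p)$ topology while the $H^q$ bound is propagated separately inside the definition of $E$, and one must verify both that $E$ is complete for the $H^p$ distance and that the fixed point recovers the full $\calC^0H^q\cap\calC^1H^{q-2}$ regularity and the weak formulation a posteriori from the Duhamel identity and Proposition \ref{gene_exist}. The remaining points — validity of \fref{estimu} also at level $p$, and strong continuity of $U$ on the various Sobolev spaces — are straightforward consequences of the (deliberately over-strong) spatial regularity imposed on $B^H$ through \fref{assumQ}.
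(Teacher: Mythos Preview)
Your argument is essentially the paper's: fix a path of $B^H$, use Propositions~\ref{geneevol} and~\ref{gene_exist} to obtain $U$ and the linear estimate~\fref{estimu}, then run a contraction on the Duhamel map, using the $H^q$-growth bound of \textbf{H} for the self-mapping and the $H^p$-Lipschitz bound for the contraction, followed by the standard maximal-time and Gronwall arguments. You are more explicit than the paper (which just cites Pazy) about the two-norm fixed point set-up---ball with an $H^q$ ceiling, metric in $H^p$, completeness via weak lower semicontinuity---and for the charge identity at $q\le1$ you approximate the data whereas the paper uses the $H^{-1}$--$H^1$ duality for $q=1$ and a regularization \`a la Proposition~\ref{gene_exist} for $q=0$; note that your data-approximation route tacitly needs the approximating solutions to live in $\calC^0H^2$, which is not guaranteed by \textbf{H} at level $q<2$, so the paper's route is the safer one there.
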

\begin{proof} 
The proof is very classical and relies on Proposition \ref{gene_exist} and a standard fixed point procedure. First of all, \fref{assumQ} insures that $\P$ almost surely, $B^H \in \calC^0(0,T,H^{q+4}(\Rm^n))$, which allows us to define an evolution operator $U$ according to Proposition \ref{geneevol}. The rest of the proof follows the usual arguments of for instance \cite{pazy}, Theorem 1.4, Chapter 6, that we sketch here for completeness. Given $\Psi_0$ in $H^{q}(\Rm^n)$ and $\varphi \in \calC^0(0,t_1,H^q(\Rm^n))$ for some $t_1>0$ to be fixed later on, denote by $u:=F(\varphi)$ the solution to $\fref{eqgene}$ in $\calC^0(0,t_1,H^{q}(\Rm^n)) \cap\calC^1(0,t_1,H^{q-2}(\Rm^n))$ where $f=g(\varphi)$ belongs to $\calC^0(0,t_1,H^{q}(\Rm^n))$ thanks to hypothesis \textbf{H}. Using the latter, estimate \fref{estimu}, following the aforementioned theorem of \cite{pazy}, one can establish the existence of  $M>0$ and a time $t_1(M)$ such that $F$ maps the ball of radius $M$ of $\calC^0(0,t_1,H^{q}(\Rm^n))$ centered at 0 into itself. In this ball, the function $g$ being uniformly Lipschitz on $H^p(\Rm^n)$, $0\leq p\leq q$ according to hypothesis \textbf{H}, existence and uniqueness of a fixed point  of $F$ in $\calC^0(0,t_1,H^{q}(\Rm^n))$, denoted by $\varphi^\star$, follows from the contraction principle. Moreover, this solution verifies the representation formula \fref{rep} with $f=g(\varphi^\star) \in \calC^0(0,t_1,H^{q}(\Rm^n))$ and according to Proposition \ref{gene_exist}, belongs in addition to $\calC^1(0,t_1,H^{q+2}(\Rm^n))$ and satisfies \fref{mag}. The existence of a maximal time of existence $T_M$  is established following the same lines as \cite{pazy}. When $g$ is globally Lipschitz in $H^q(\Rm^n)$, then $T_M<\infty$ by the Gronwall Lemma.

Regarding the conservation of charge \fref{charge}, the case $q \geq 1$ is direct since the solution $\varphi^\star$ is regular enough to be used as a test function in \fref{defsol2} (after interpretation of $(\cdot, \cdot )$ as the $H^{-1}-H^1$ duality pairing when $q=1$) and it then suffices to take the imaginary part of the equation. When $q=0$, we use a regularization procedure very similar to that of the proof of Proposition \ref{gene_exist}, the details are left to the reader.
 \end{proof}

\section{Back to the stochastic Schr\"odinger equation} \label{back}
We apply the result of the last section to prove Theorem \ref{th1}. Owing the solution $\varphi$ of Theorem \ref{th_mag}, it suffices to show (i) that $e^{-i B_t^H} \varphi$ is a solution to \fref{defsol}, which will follow from the regularity of $\varphi$ and Lemma \ref{chain2}, this yields existence; and (ii) that all solutions to \fref{defsol} with the corresponding regularity read $e^{-i B_t^H} u$ where $u$ is a solution to \fref{mag}, which yields uniqueness since \fref{mag} has a unique solution. As explained earlier, the last step requires a regularization procedure since test functions of the form $w=e^{-i B_t^H} z$, with $z$ smooth, are not differentiable in time and cannot be used directly in \fref{defsol}. In the whole proof, $T$ denotes some time $T\geq T_M$.\\

\noindent \textbf{Proof of Theorem \ref{th1}.} \textit{Existence.} Let $\varphi$ be the unique solution to \fref{mag} according to Theorem \ref{th_mag} and define, for any test function $w \in \calC^1(0,T_M,H^{q+2}(\Rm^n))$,
$$
F(B^H_t,t)=( e^{-i B_t^H} \varphi(t), w(t)). 
$$
We verify that $F$ satisfies the hypotheses of Lemma \ref{chain2}. First of all, $F$ is clearly continuously differentiable w.r.t. the first variable and for all $v \in H^{q+4}(\Rm^n)$, let $\phi(t):=\partial_1 F(B^H_t,t)(v)=i ( e^{-i B_t^H} \varphi(t) v, w(t))$. Second of all, we need to show that $\phi \in \calC^{0,\lambda}(0,T)$ for $\lambda$ verifying $\lambda+\gamma>1$, together with the bound \fref{hypphi}. To this goal, we have for $(t,s) \in [0,T_M]^2$:
\bee
\phi(t)-\phi(s)&=&i\left( (e^{-i B_t^H}-e^{-i B_s^H}) \varphi(t) v, w(t)\right)+i\left(e^{-i B_s^H} (\varphi(t)-\varphi(s)) v, w(t)\right)\\
&&+i\left(e^{-i B_s^H} \varphi(s) v, (w(t)-w(s))\right)\\
&:=&T_1+T_2+T_3.
\eee
We treat each term separately. We have, using standard Sobolev embeddings for $n\leq 3$:
\bea \nonumber
|T_1| &\leq& C \|\varphi(t)\|_{L^2} \|w(t)\|_{L^\infty}  \|v\|_{L^\infty} \|B_t^H-B_s^H\|_{L^2}\\ \nonumber
&\leq & C (t-s)^{\gamma} \|v\|_{H^{q+4}} \|B_t^H\|_{\calC^{0,\gamma}(0,T_M,L^2)}\\
&\leq & C (t-s)^{\gamma} \|v\|_{H^{q+4}}, \label{estimT1}
\eea
for all $0\leq \gamma<H$. Regarding the term $T_2$, notice that the product $e^{i B_t^H} \overline{v} w$ belongs to $H^{q+2}(\Rm^n)$ when $n \leq 3$, so that since $\partial_t \varphi \in \calC^{0}(0,T_M,H^{q-2}(\Rm^n))$, we can write
$$
\left(e^{-i B_s^H} (\varphi(t)-\varphi(s)) v, w(t)\right)=\int_s^t \langle \partial_\tau \varphi(\tau) , e^{ i B_s^H} \overline{v} w(t) \rangle_{H^{q-2},H^{q+2}} d\tau,
$$ 
where when $q\geq 2$, the pairing $\langle \cdot, \cdot \rangle_{H^{q-2},H^{q+2}}$ is replaced by the $L^2$ inner product. Hence,
$$
|T_2| \leq |t-s| \|w(t)\|_{H^{q+2}} \|v\|_{H^{q+4}} \|B_t^H\|_{\calC^{0}(0,T_M,H^{q+4})} \|\partial_t \varphi\|_{\calC^{0}(0,T_M,H^{q-2})}  \leq C |t-s| \|v\|_{H^{q+4}}.
$$
Estimation of $T_3$ is straightforward and leads to a similar estimate as above. This, together  with \fref{estimT1} yields that
$$
\| \phi\|_{\calC^{0,\gamma}(0,T_M)} \leq C \|v\|_{H^{q+4}}.
$$
Since $\frac{1}{2}<H$, we can pick $H=\frac{1}{2}+\eps$ and $\gamma=\frac{1}{2}+\frac{\eps}{2}$ such that $2\gamma>1$ and the assumption on $\phi$ of Lemma \ref{chain2} is verified. It remains to show that $\partial_2 F$ exists and is continuous, and this is a consequence of the fact that $\partial_t \varphi \in \calC^{0}(0,T_M,H^{q-2}(\Rm^n))$.
Applying Lemma \ref{chain2} then yields
\begin{align} \label{eqpphi}
&( e^{-i B_t^H} \varphi(t), w(t))-( \Psi_0, w(0))=\int_0^t\langle  \partial_\tau \varphi(\tau), e^{i B_\tau^H} w(\tau)\rangle_{H^{q-2},H^{q+2}} d\tau\\ \nonumber
&+\int_0^t( e^{-i B_\tau^H} \varphi(\tau), \partial_\tau w(\tau)) d\tau+i
\sum_{p \in \Nm} \lambda_p \int_0^t  ( e^{-i B_\tau^H} \varphi(\tau)e_p, w(\tau)) d \beta^H_p(\tau).
\end{align}
In order to conclude, picking $w(t,x)=w(x) \in H^{q+2}(\Rm^n)$ in \fref{defsol2} with $f=g(\varphi)$, it comes that \fref{mag} is verified in $H^{q-2}(\Rm^n)$ for all $t\in [0,T_M]$ and almost surely. This yields $\partial_\tau \varphi= i \Delta_{B_t^H} \varphi-i g(\varphi)$ in $H^{q-2}(\Rm^n)$, and replacing  $\partial_\tau \varphi$ by its latter  expression in \fref{eqpphi}, setting
$\Psi=e^{-i B_t^H} \varphi \in \calC^0(0,T_M,H^q(\Rm^n)) \cap \calC^{0,\gamma}(0,T_M,H^{q-2}(\Rm^n))$ for all $0\leq \gamma<H$, finally yields \fref{defsol}.

\textit{Uniqueness, Step 1: regularization.} Starting from a solution $\Psi$ to \fref{defsol} with the above regularity, we would like to choose the test function $w=e^{-i B_\tau^H} z$ for some regular function $z$ in order to recover the weak formulation of \fref{mag}, which admits a unique solution. This is not allowed of course since $B^H$ is not differentiable. The solution is to use the H\"older regularity of $\Psi$ in order to reinterpret the term
$$
\int_0^t  \left( \Psi(s), \partial_s w(s) \right) ds
$$
as a fractional integral. To this end, let 
$$
B^{H,\eps}(t,x):=\sum_{p \in \Nm} \lambda_p e_p(x) \beta_p^{H,\eps}(t) 
$$
where $\beta^{H,\eps}_p$ is a $\calC^1$ regularization of $\beta^{H}_p$ such that $\beta^{H,\eps}_p \to \beta^{H}_p$ in $\calC^{0,\gamma}(0,T)$ almost surely for all $p$ and $\|\beta^{H,\eps}_p\|_{\calC^{0,\gamma}(0,T)} \leq \|\beta^{H}_p\|_{\calC^{0,\gamma}(0,T)}$, $0\leq \gamma <H$. We have 
\be \label{conVV}
B^{H,\eps}\to B^{H}\quad \textrm{in}\quad \calC^{0,\gamma}(0,T,H^{q+4}(\Rm^n)),\qquad \P \quad \textrm{almost surely}.
\ee 
Indeed:
$$
\| B^{H,\eps}-B^{H}\|_{\calC^{0,\gamma}(0,T,H^{q+4})} \leq \sum_{p \in \Nm} \lambda_p \| e_p \|_{H^{q+4}} \|\beta^{H,\eps}_p -\beta^{H}_p\|_{\calC^{0,\gamma}(0,T)}
$$
and
$$
\|\beta^{H,\eps}_p -\beta^{H}_p\|_{\calC^{0,\gamma}(0,T)} \leq 2 \|\beta^{H}_p\|_{\calC^{0,\gamma}(0,T)},
$$
which, together with the convergence of $\beta_p^{H,\eps}$ to $\beta_p^{H}$ in $\calC^{0,\gamma}$, \fref{defK} and the Weierstrass rule gives the desired result. Set $w_\eps=e^{-i B_t^{H,\eps}} z$ where $z \in \calC^1(0,T,H^{q+2}(\Rm^n))$. Then
\begin{align*}
&\int_0^t  \left( \Psi(s), \partial_s w_\eps(s) \right) ds\\
&\qquad =\int_0^t  \left( \Psi(s)e^{i B_s^{H,\eps}}, \partial_s z(s) \right)ds-i\sum_{p\in \Nm} \lambda_p \int_0^t  \left( \Psi(s)e^{i B_s^{H,\eps}},z(s) e_p\right)  (\beta_p^{H,\eps}(s))' ds,
\end{align*}
where all permutations of sum and integrals were permitted since the series defining $B^{H,\eps}$ is normally convergent in $\calC^1(0,T,H^{q+4}(\Rm^n))$. We now use the fact that for a continuously differentiable function $f$, $D_{t-}^\alpha f \to f'$ as $\alpha \to 1$, see \cite{zahle} section 1, and that $D_{t-}^{\alpha+\beta} f=D_{t-}^{\alpha} D_{t-}^{\beta} f$, where the operator $D_{t-}^{\alpha}$ is defined in section \ref{prelim}. We then introduce, for $\frac{1}{2}<1-\mu<\gamma$,
\bee
I^\eps_p&:=&\int_0^t  \left( \Psi(s)e^{i B_s^{H,\eps}},e_p\right)  (\beta_p^{H,\eps}(s)-\beta_p^{H,\eps}(t))' ds\\
&=&\lim_{\alpha \to 1} I^{\eps,\alpha}_p := \lim_{\alpha \to 1} \int_0^t  \left( \Psi(s)e^{i B_s^{H,\eps}},e_p z(s)\right) D_{t-}^{\alpha-\mu} D_{t-}^{\mu} (\beta_p^{H,\eps})_{t-}(s)ds,
\eee
where $(\beta_p^{H,\eps})_{t-}(s)=\beta_p^{H,\eps}(s)-\beta_p^{H,\eps}(t^-)$. Owing the fact that $( \Psi(s)e^{i B_s^{H,\eps}},z(s) e_p)\in \calC^{0,\gamma}(0,T_M)$ for any $0\leq \gamma<H$ and using the fractional integration by part formula of \cite{zahle} section 1, we find
$$
I^{\eps,\alpha}_p=(-1)^{\alpha-\mu} \int_0^t  D_{0+}^{\alpha-\mu} \left( \Psi(s)e^{i B_s^{H,\eps}},z(s) e_p\right)  D_{t-}^{\mu} (\beta_p^{H,\eps})_{t-}(s)ds.
$$
Moreover, we can send $\alpha$ to one above thanks to dominated convergence since the term $( \Psi(s)e^{i B_s^{H,\eps}},z(s) e_p)$ belongs to $\calC^{0,\gamma}(0,T_M)$ and $1-\mu<\gamma$ in order to obtain
$$
I^\eps_p=(-1)^{1-\mu} \int_0^t  D_{0+}^{1-\mu} \left( \Psi(s)e^{i B_s^{H,\eps}},z(s) e_p\right)  D_{t-}^{\mu} (\beta_p^{H,\eps})_{t-}(s)ds.
$$
We derive below some estimates needed to pass to the limit $\eps \to 0$.\\
\noindent \textit{Uniqueness, Step 2: uniform estimates.} Recall that $w_\eps=e^{-i B_t^{H,\eps}} z$ and define first, with $w=e^{-i B_t^{H}} z$:
$$\phi^\eps(s)=( \Psi(s),(w_\eps(s)-w(s)) e_p):=( \Psi(s),r_\eps(s)e_p).$$
In order to estimate $D_{0+}^{1-\mu} \phi^\eps$, we write
\bee
\phi^\eps(t)-\phi^\eps(s)&=&( \Psi(t)- \Psi(s),r_\eps(t)e_p)+( \Psi(s),(r_\eps(t)-r_\eps(s))e_p):=T_1+T_2.
\eee
For the term $T_1,$ we use the $\calC^{0,\gamma}$ regularity of $\Psi$ in $H^{q-2}$, while we use that of $r^\eps$ for $T_2$. It comes with the help of standard Sobolev embeddings:
\bee
|T_1| &\leq& |t-s|^\gamma \|\Psi\|_{\calC^{0,\gamma}(0,T_M,H^{q-2})}\|r_\eps(t)\|_{H^{q+2}} \|e_p\|_{H^{q+2}}\\
|T_2| &\leq&  |t-s|^\gamma \|\Psi(t)\|_{L^2} \|e_p\|_{L^\infty} \| r_\eps \|_{\calC^{0,\gamma}(0,T,L^{2})}.
\eee
Since $1-\mu<\gamma$, this gives
\be \label{estphiW}
\| \phi^\eps\|_{W_{1-\mu,1}(0,T_M)} \leq C \|e_p\|_{H^{q+4}} \| B^{H,\eps}-B^H \|_{\calC^{0,\gamma}(0,T,H^{q+4}(\Rm^n))}.
\ee
On the other hand, using the notation of section \ref{prelim}, we find
\be \label{estimbet}
|D_{t-}^{\mu} (\beta_p^{H,\eps})_{t-}(s)| \leq \Lambda_{1-\mu}(\beta^{H,\eps}_p) \leq C \|\beta_p^{H,\eps}\|_{\calC^{0,\gamma}(0,T)} \leq C \|\beta_p^{H}\|_{\calC^{\gamma}(0,T)} < \infty, \ee
and 
\be \label{estimbet2}
|D_{t-}^{\mu} (\beta_p^{H,\eps}-\beta_p^{H})_{t-}(s)| \leq \Lambda_{1-\mu}(\beta^{H,\eps}_p-\beta_p^{H}) \leq C \|\beta_p^{H,\eps}-\beta_p^{H}\|_{\calC^{0,\gamma}(0,T)}. \ee

% Similar arguments as the proof of existence together with \fref{conVV} lead to
% $$
% |D_{0+}^{\alpha-\mu} ( \Psi(s)e^{i B_s^{H,\eps}},z(s) e_p)| \leq C (s^{\mu-\alpha}+s^{\mu+\gamma-\alpha}) \|e_p \|_{H^{q+4}(\Rm^n)}
% $$
% so that since $\mu-\alpha\geq \mu-1>-\frac{1}{2}$, easy algebra and dominated convergence yield

\noindent \textit{Uniqueness, Step 3: passing to the limit.} We have all needed now to pass to the limit in the weak formulation \fref{defsol}. Plugging $w^\eps(t)=e^{-i B_t^{H,\eps}} z \in \calC^1(0,T,H^{q+2}(\Rm^n))$ yields
\begin{align} \label{defsol3} \nonumber
&\left( \Psi(t), w^\eps(t)   \right)-\left( \Psi_0, z(0) \right)
=\int_0^t  \left( \Psi(s), \partial_s w^\eps(s) \right) ds
\\[3mm]
& -i\int_0^t \left( \Psi(s),\Delta w^\eps(s)\right) ds  +i  \int_0^t\left(\Psi(s),w^\eps(s)  d B_s^H\right)+i\int_0^t \left(g(\Psi(s)),w^\eps(s) \right) ds.
\end{align}
We have
$$
\int_0^t  \left( \Psi(s), \partial_s w^\eps(s) \right) ds=\int_0^t  \left( \Psi(s)e^{i B_t^{H,\eps}}, \partial_s z(s) \right) ds-i\sum_{p \in \Nm^*} I_p^\eps.
$$
Using \fref{estimint}-\fref{conVV}-\fref{estphiW}-\fref{estimbet}-\fref{estimbet2} as well as \fref{defK}, we can pass to the limit in the latter equation and obtain that, $\forall t \in [0,T_M]$:
$$
\lim_{\eps \to 0} \int_0^t  \left( \Psi(s), \partial_s w^\eps(s) \right) ds=\int_0^t  \left( \Psi(s)e^{i B_s^{H}}, \partial_s z(s) \right) ds-i  \int_0^t\left(\Psi(s)e^{i B_s^{H}}, z(s) d B_s^H\right).
$$
Similar arguments can be employed to pass to the limit in the remaining terms of \fref{defsol3}. The stochastic integrals simplify and we are left with
\begin{align} \label{defsol4} \nonumber
&\left( \Psi(t)e^{i B_t^{H}}, z(t)   \right)-\left( \Psi_0, z(0) \right)
=\int_0^t  \left( \Psi(s)e^{i B_s^{H}}, \partial_s z(s) \right) ds
\\[3mm]
& -i\int_0^t \left( \Psi(s),\Delta (e^{-i B_s^{H}}z(s))\right) ds  +i\int_0^t \left(g(\Psi(s)),e^{-i B_s^{H}}z(s) \right) ds.
\end{align}
Hence, $\Psi e^{i B_t^{H}}$ verifies the magnetic Schr\"odinger equation $\fref{defsol2}$ with $f=g(\Psi(s))e^{i B_s^{H}}=g(\Psi(s)e^{i B_s^{H}})$. Since the latter admits a unique solution $\varphi \in \calC^0(0,T_M,H^q(\Rm^n)) \cap \calC^1(0,T_M,H^{q-2}(\Rm^n))$ according to Theorem \ref{th_mag}, we can conclude that \fref{SSE} admits a unique solution. The representation formula \fref{repre_th} follows then without difficulty with the identification $\Psi e^{i B_t^{H}}=\varphi$ and Theorem \ref{th_mag}. This ends the proof of Theorem \ref{th1}.
\section{Appendix}
\subsection{Proof of Lemma \ref{chain2}} First of all, we know by section \ref{prelim} that $B_t^{H}$ belongs to $E:=\calC^{0,\gamma}(0,T,V)$, $\P$ almost surely for $0\leq \gamma<H$. We proceed by approximation in order to apply the change of variables formula \fref{chain} valid in finite dimensions. Let $$B_t^{H,N}(x):=\sum_{p=0}^N \lambda_p e_p(x) \beta^H_p(t),$$ so that, 
\be \label{convB}B_t^{H,N} \to B_t^{H} \quad \textrm{in   } E, \qquad \P \textrm{   almost surely,}
\ee
 thanks to \fref{assumQ} and \fref{defK}.  We have moreover the bound $\|B_t^{H,N}\|_E \leq \|B_t^{H}\|_E:=M$.  Since $F$ is $\calC^1$, and $\phi \in \calC^{0,\lambda}(0,T)$ with $\lambda+\gamma>1$, we can use \fref{chain} and find, for $0\leq s \leq t \leq T$ fixed:
\bee
F(B_t^{H,N},t)-F(B_s^{H,N},s)=\int_s^t \partial_2 F(B_{\tau}^{H,N},\tau) d\tau+
\sum_{p=0}^N \lambda_n \int_s^t \partial_1 F(B_{\tau}^{H,N},\tau)(e_p)\, d \beta_p^H(\tau).
\eee
By continuity of $F$, it is direct to pass to the limit in the left hand side. The same holds for the first term of the right hand side thanks to dominated convergence and the fact that $\partial_2 F$ is continuous and $B_\tau^{H,N}$ is bounded in $E$ independently of $N$. Regarding the last term, let $\phi^N_p(\tau)=\partial_1 F(B_{\tau}^{H,N},\tau)(e_p)$ and 
$$
f_p^N:=\int_s^t \phi_p^N(\tau) \, d \beta_p^H(\tau)=(-1)^\alpha \int_s^t D_{s+}^\alpha \phi_p^N(\tau) D_{t-}^{1-\alpha} (\beta_p^H)_{t-}(s) d\tau,
$$
by \fref{stiel} for some $\alpha$ verifying $\alpha<\lambda$ and $1-\alpha<\gamma$. Since $\|B_t^{H,N}\|_E \leq M$, we have by \fref{hypphi}
$$
|\tau-s|^{-\alpha-1}|(\phi_p^N(\tau)-\phi_p^N(s))| \leq |\tau-s|^{\lambda-\alpha-1} \|\phi_p^N \|_{\calC^{0,\lambda}(0,T)} \leq C_M |\tau-s|^{\lambda-\alpha-1} \|e_p \|_V,
$$
where $\lambda-\alpha>0$. The latter estimate, dominated convergence, \fref{convB} together with the continuity of $\partial_1 F$ yield first that $D_{s+}^\alpha \phi_p^N(\tau) \to D_{s+}^\alpha \phi_p(\tau)$ a.e. where $\phi_p(\tau)=\partial_1 F(B_{\tau}^{H},\tau)(e_p)$. Then, since $|D_{t-}^{1-\alpha} (\beta_p^H)_{t-}(s)| \leq \Lambda_\alpha(\beta^H_p)<\infty$, $\P$ almost surely, we have 
\be \label{estimDphi}
|D_{s+}^\alpha \phi_p^N(\tau) D_{t-}^{1-\alpha} (\beta_p^H)_{t-}(s) | \leq C |\tau-s|^{\lambda-\alpha-1}\|e_p \|_V
\ee
so that dominated convergence implies that $f_p^N \to f_p=\int_s^t \phi_p(\tau) \, d \beta_p^H(\tau)$, for all $p \in \Nm$. Finally, since
$$
\lambda_p |f_p^N| \leq C \lambda_p \|e_p \|_V,
$$
thanks to \fref{estimDphi}, and moreover \fref{assumQ} holds, we can apply the Weierstrass rule and conclude that $\P$ almost surely:
$$
\lim_{N\to \infty} \sum_{p=0}^N \lambda_n f_p^N=\sum_{p=0}^\infty \lambda_n \int_s^t \partial_1 F(B_{\tau}^{H},\tau)(e_p)\, d \beta_p^H(\tau).
$$
This ends the proof.
\subsection{Proof of Lemma \ref{fubini}}
The hypothesis on $F$ show that the integral on the left is well-defined and that
$$
\sum_{p \in \Nm} \lambda_p \int_0^t \left(\int_{\Rm^n}  F_{\tau,x}(e_p) dx \right) d \beta^H_p(\tau) = \lim_{N\to \infty } \sum_{p=0}^N \lambda_p \int_0^t \left(\int_{\Rm^n}  F_{\tau,x}(e_p) dx \right) d \beta^H_p(\tau):=\lim_{N\to \infty } I_N.
$$
Morever, for $1-H<\alpha<\frac{1}{2}$, we have that $|D_{t-}^{1-\alpha} (\beta_p^H)_{t-}(s)| \leq \Lambda_\alpha(\beta^H_p)<\infty$, $\P$ almost surely and $D_{0+}^\alpha F_{t,x}(e_n) \in L^1((0,T)\times \Rm^n)$ since $F_{t,x}(e_p)\in W_{\alpha,1}(0,T,L^1(\Rm^n))$. Hence, using the definition of the stochastic integral and Fubini Theorem, it comes
\bee
I_N&=&(-1)^{|\alpha|}\sum_{p=0}^N \lambda_p \int_0^t \left[D_{0+}^\alpha \int_{\Rm^n} F_{\tau,x}(e_p)(\tau) dx \right]\left[D_{t-}^{1-\alpha} (\beta_p^H)_{t-}(\tau)\right] d\tau
\\
&=&
(-1)^{|\alpha|}\sum_{p=0}^N \lambda_p \int_{\Rm^n} \left(\int_0^t \left[D_{0+}^\alpha F_{\tau,x}(e_p)(\tau)  \right] \left[D_{t-}^{1-\alpha} (\beta_p^H)_{t-}(\tau)\right] d\tau \right)dx\\
&=& \int_{\Rm^n} \left(\sum_{p=0}^N \lambda_p \int_0^t  F_{t,x}(e_p)  d \beta^H_p(\tau) \right) dx:=\int_{\Rm^n} f_N(x) dx.
\eee
Moreover, $\P$ almost surely:
\bee
\|f_N\|_{L^1} &\leq&  \sum_{p=0}^N \lambda_p  \left\| \|F_{t,x}(e_p)\|_{W_{\alpha,1}(0,T)} \right\|_{L^1}  \Lambda_\alpha(\beta_p^H)\\
& \leq& C \|F\|_{W_{\alpha,1}(0,T,\calL(V,L^1))} \sum_{p=0}^N \lambda_p \|e_p\|_V \Lambda_\alpha(\beta_p^H),
\eee
% since $\E \Lambda_\alpha(\beta_p^H)$ is a finite constant independent of $p$
% Using monotone convergence and the Jensen inequality, we find that the last term on the right belongs to $L^1(\Rm^n)$ can be bounded by
% $$
% \|F_t\|_{W_{\alpha,1}(0,T,\calL(V,L^1))} \sum_{p=0}^\infty \lambda_p \|e_n \|_V.
% $$
so that thanks to \fref{finitelamb}, the series defining $f_N$ converges strongly in $L^1(\Rm^n)$ and almost surely. This  yields
$$
\lim_{N\to \infty } I_N=\int_{\Rm^n} \left(\sum_{p=0}^\infty \lambda_p \int_s^t  F_{t,x}(e_n)  d \beta^H_p(\tau) \right) dx
$$
and ends the proof.
\subsection{Proof of Remark \ref{rem2}}
For $q \geq 2$, using the regularity $\Psi \in \calC^0(0,T_M,H^q(\Rm^n)) \cap \calC^{0,\gamma}(0,T_M,H^{q-2}(\Rm^n))$ and picking $w \in L^2(\Rm^n)$, we can recast \fref{defsol} as
\begin{align*} 
&\left( \Psi(t), w   \right)-\left( \Psi_0, w \right)=
 -i\int_0^t \left( \Delta \Psi(s), w\right) ds  +i  \int_0^t\left(\Psi(s), w  d B_s^H\right)+i\int_0^t \left(g(\Psi(s)),w \right) ds. 
\end{align*}
Since the mapping $F: e_p \to \overline{\Psi(s)} w e_p$ belongs to $\calC^{0,\gamma}(0,T_M,\calL(V,L^1(\Rm^n)))$, we can use Lemma \ref{fubini} together with Fubini theorem to arrive at 
\begin{align*} 
&\left( \Psi(t), w   \right)-\left( \Psi_0, w \right)=
\\[3mm]
&-i \left( \int_0^t \Delta \Psi(s) ds, w\right)  +i  \left( \int_0^t\Psi(s)d B_s^H, w  \right)+i\left( \int_0^t g(\Psi(s))ds,w \right), 
\end{align*}
which yields the desired result.
\footnotesize{
\bibliography{bibliography} \bibliographystyle{siam}}
%%%%%%%%%%%%%%%%%%%%%%%%%%%%%%%%%%%%%%%%%%%%
%%%%%%%%%%%%%%%%%%%%%%%%%%%%%%%%%%%%%%%%%%%%
%%%%%%%%%%%%%%%%%%%%%%%%%%%%%%%%%%%%%%%%%%%%
\end{document}